\newtheorem*{maintheorem*}{Main Theorem}
\newtheorem{theorem}{Theorem}[section]
\newtheorem{prop}[theorem]{Proposition}
\newtheorem{lem}[theorem]{Lemma}
\newtheorem{cor}[theorem]{Corollary}
\theoremstyle{definition}
\newtheorem{defin}[theorem]{Definition}
\newtheorem{ex}[theorem]{Example}
\newtheorem{question}[theorem]{Question}
\numberwithin{equation}{section}
\newcommand{\cc}{\mathbb{C}}
\newcommand{\nn}{\mathbb{N}}
\newcommand{\pp}{\mathbb{P}}
\newcommand{\qq}{\mathbb{Q}}
\newcommand{\rr}{\mathbb{R}}
\newcommand{\zz}{\mathbb{Z}}
\providecommand\ldb{\llbracket}
\providecommand\rdb{\rrbracket}
\newcommand{\gp}{\text{gp}}
\newcommand{\uu}{\mathcal{U}}
\keywords{length-finite factorization, integral domains, finite factorization, bounded factorization, length-factoriality, atomic domain}
\subjclass[2020]{Primary: 11Y05, 20M13; Secondary: 06F05, 20M14}
\begin{document}

\mbox{}
\title{On three families of dense Puiseux monoids}

\author{Scott T. Chapman}
\address{Mathematics Department\\Sam Houston State University\\Huntsville, TX 77340}
\email{scott.chapman@shsu.edu}

\author{Felix Gotti}
\address{Mathematics Department\\MIT\\Cambridge, MA 02139}
\email{fgotti@mit.edu}

\author{Marly Gotti}
\address{Apple Inc.\\One Apple Park Way\\Cupertino, CA 95014}
\email{marlygotti@apple.com}

\author{Harold Polo}
\address{Mathematics Department\\UC Irvine\\Irvine, CA 92697}
\email{harold.polo@uci.edu}

\date{\today}
\maketitle

\begin{abstract}
    A positive monoid is a submonoid of the nonnegative cone of a linearly ordered abelian group. The positive monoids of rank $1$ are called Puiseux monoids, and their atomicity, arithmetic of length, and factorization have been systematically investigated for about ten years. Each Puiseux monoid can be realized as an additive submonoid of the nonnegative cone of $\mathbb{Q}$. We say that a Puiseux monoid is dense if it is isomorphic to an additive submonoid of $\mathbb{Q}_{\ge 0}$ that is dense in $\mathbb{R}_{\ge 0}$ with respect to the Euclidean topology. Every non-dense Puiseux monoid is known to be a bounded factorization monoid. However, the atomic structure as well as the arithmetic and factorization properties of dense Puiseux monoids turn out to be quite interesting. In this paper, we study the atomic structure and some arithmetic and factorization aspects of three families of dense Puiseux monoids. %Dense Puiseux monoids can be used to distinguish the nested diagram of classes of atomicity given by Anderson, Anderson, and Zafrullah in 1990, as well as the nested diagram of classes of AP-ness studied by Anderson and Quintero in 1997. In this paper, we study the atomic structure and some factorization and arithmetic properties of three families of dense Puiseux monoids.
\end{abstract}
	
\bigskip
%------------------------------------------------------------------------------------%
%----------------------------------INTRODUCTION--------------------------------------%
%------------------------------------------------------------------------------------%
\section{Introduction}
\label{sec:intro}

A submonoid of the nonnegative cone of a linearly ordered abelian group is often called a positive monoid. In particular, rank-one positive monoids are known as Puiseux monoids, a term introduced when their atomic structure was first studied by the second and third authors in~\cite{fG17,GG18}. Since then, not only the atomicity but also the factorization behavior and the arithmetic of lengths of Puiseux monoids have been actively studied by several authors over the last ten years (see the recent papers~\cite{CJMM25} and references therein). As the Grothendieck group of any Puiseux monoid is a rank-one torsion-free abelian group, every Puiseux monoid is isomorphic to a monoid of consisting of rationals under the standard addition \cite[Section~24]{lF70} and, as a result, every nontrivial Puiseux monoid is isomorphism to a monoid consisting of nonnegative rationals under the standard addition \cite[Theorem~2.9]{rG84}. The interested reader can find a survey on the atomicity of Puiseux monoids in~\cite{CGG21} by the first three authors.
\smallskip

Albeit a natural generalization of numerical monoids, Puiseux monoids exhibit a complex and interesting atomic/arithmetic structure. For instance, for any prescribed nonnegative integer $n$, there is a non-atomic Puiseux monoid with exactly $n$ atoms \cite[Proposition~5.4]{fG17}, while there are non-atomic Puiseux monoids with infinitely many atoms \cite[Example~3.5]{fG17}. In addition, there are atomic Puiseux monoids having full systems of sets of length, as proved in~\cite{fG19a} by the second author (the elasticity of Puiseux monoids was studied in~\cite{GO20} by O'Neill and the second author). Moreover, as Theorem~\ref{thm:existence of atomically dense PM} states, there are atomic Puiseux monoids whose sets of atoms are dense in the nonnegative part of the real line under the Euclidean topology.
\smallskip

The remarkable variety of unexpected atomic and factorization behaviors exhibited by Puiseux monoids have provided a rich source of (counter)examples, which have proven instrumental over recent decades in testing the sharpness of theorems and disproving conjectures within commutative algebra and factorization theory. For instance, a Puiseux monoid is the most relevant ingredient in Grams' construction of the first known atomic domain not satisfying the ACCP (see~\cite{aG74}), while Puiseux monoids have been essential in fully addressing Gilmer's question~\cite[page~189]{rG84} about the ascent of atomicity to monoid algebras (see~\cite[Section~5]{CG19} and~\cite{GR25}). Also, in the recent paper~\cite{GGP25}, Gonzalez, Panpaliya, and the second author generalized the class of Puiseux monoids in~\cite[Section~5]{CG19} to argue that neither quasi-atomicity nor almost atomicity (two weaker notions of atomicity introduced by Boynton and Coykendall back in 2015) ascend to monoid algebras.
\smallskip

Let $M$ be a Puiseux monoid, and assume that $M$ consists of nonnegative rationals. Since addition is continuous in the real line under the Euclidean topology, the subspace topology inherited by $M$ is intrinsically linked to its algebraic structure. In particular, if one aims to study the atomic decomposition of elements of $M$, examining neighborhoods of $0$ might provide substantial insight. Indeed, if $0$ is not a limit point of $M$, then one can readily show that $M$ is atomic~\cite[Proposition~4.5]{fG19}. However, when $0$ is a limit point of $M$, the atomic structure of $M$ might become considerably intricate and significantly different for distinct classes of Puiseux monoids. When $0$ is a limit point of $M$, the additive closedness of $M$ implies that $M$ is dense in the nonnegative part of the real line under the Euclidean topology. Thus, $M$ is dense in the nonnegative part of the real line if and only if $0$ is a limit point of $M \setminus \{0\}$. This observation motivates the following definition.

\begin{defin}
	A Puiseux monoid is \emph{dense} if it is dense in the nonnegative part of the real line under the Euclidean topology.
\end{defin}
\noindent Dense Puiseux monoids serve as valuable tools to differentiate between classes in the nested diagram of atomicity introduced by Anderson, Anderson, and Zafrullah in their landmark paper~\cite{AAZ90}, where they presented the bounded and finite factorization properties as two weaker variants of the unique factorization property. Additionally, dense Puiseux monoids help distinguish the nested classes of AP-ness investigated by Anderson and Quintero in~\cite{AQ97}. We dedicate this paper to examining some atomic and factorization aspects of three families of dense Puiseux monoids, which have been selected based on significant examples from recent literature (see \cite{CGG20,GG25,GR25,hP20}). The class of dense Puiseux monoids has also been studied by Bras-Amor\'os and the third author in~\cite{BG21}.
\smallskip

In Section~\ref{sec:Background and Notation}, which is the background section, we introduce most of the standard notation and terminology as well as the non-standard results we shall be using later.
\smallskip

In Section~\ref{sec:general facts of DPM}, we provide some basic results related to dense Puiseux monoids and their monoid homomorphisms. We conclude the section producing atomic Puiseux monoids whose sets of atoms are dense in $\rr_{\ge 0}$.
\smallskip

In Section~\ref{sec:k-prime Reciprocal PM}, we briefly discuss some examples of dense Puiseux monoids generated by reciprocals of length-$k$ elements of $\nn$ (for any fixed $k \in \nn$). 
\smallskip

In Section~\ref{sec:p-adic PM}, we first study the class of $p$-adic Puiseux monoids for all $p \in \pp$ (here $\pp$ denotes the set of primes): for each $p \in \pp$, a $p$-adic Puiseux monoid is a submonoid of the valuation Puiseux monoid
\[
    \nn_0\bigg[\frac1{p}\bigg] := \bigg\{ f\bigg(\frac1{p}\bigg) : f(x) \in \nn_0[x] \bigg\},
\]
where $\nn_0[x]$ is the semiring of polynomials with nonnegative integer coefficients. We establish necessary and sufficient conditions for $p$-adic Puiseux monoids to be atomic. Then we consider, for each function $f \colon \pp \to \nn_0$ such that $p \nmid f(p)$ for any $p \in \pp$ with $f(p) \neq 0$, the Puiseux monoid
\begin{equation} \label{eq:intro M_f}
    M_f := \sum_{p \in \pp} \frac{f(p)}p \nn_0,
\end{equation}
which is an internal sum of $p$-adic monoids, each of them isomorphic to $\nn_0$. Clearly, $M_f$ is dense if and only if the infimum of $\big\{ \frac{f(p)}p : p \in \pp \big\}$ is~$0$. We prove that elements of $M_f$ can be decomposed as a canonical sum, and then we use such a canonical decomposition to show that $M_f$ satisfies the ascending chain condition on principal ideals (ACCP). Finally, for each finite nonempty subset $P$ consisting of primes, we consider the following internal sum
\begin{equation} \label{eq:intro M_P}
    M_P := \sum_{p \in P} \nn_0\bigg[\frac1p\bigg],
\end{equation}
which is a dense Puiseux monoid. We prove that elements in $M_P$ can also be decomposed as a canonical sum, and then we use this to settle some divisibility properties in the monoids~$M_P$.
\smallskip

In Section~\ref{sec:multiplicatively cyclic PM}, which is the final section of the paper, we continue the study of the underlying additive monoid of the subsemiring $\nn_0[q]$ for any $q \in \qq_{>0}$, which have already been investigated by the first three authors in~\cite{GG18,CGG20}. For each $q \in \qq_{>0}$, observe that $\nn_0[q]$ is the Puiseux monoid generated by the set $\{q^n : n \in \nn_0\}$. As we did with the monoids in~\eqref{eq:intro M_f} and~\eqref{eq:intro M_P}, we establish a canonical sum decomposition for $\nn_0[q]$, and then we use it to prove that for any non-unit fraction rational~$q$, the monoid $\nn_0[q]$ satisfies the length-finite factorization property: it is atomic and, for any pair $(r,\ell) \in \nn_0[q] \times \nn_0$, the element $r$ has only finitely many factorizations of length~$\ell$. In the second part of the section, we investigate the algebraic and atomic structure of Puiseux monoids of the form
\[
    M_Q := \big\langle q^n : (q,n) \in Q \times \nn_0 \big\rangle,
\]
where $Q$ is a finite nonempty set consisting of positive rationals. Observe that $M_Q$ specializes to $\nn_0[q]$ when the set $Q$ is the singleton $\{q\}$. It is clear that the $M_Q$ is dense if and only if $\min Q < 1$. After having established a canonical sum decomposition for the monoids $M_Q$, we conclude the paper considering their atomicity, factorizations, and multiplicative closedness.

%------------------------------------------------------------------------------------%
%-----------------------------------PRELIMINARY--------------------------------------%
%------------------------------------------------------------------------------------%
\bigskip
\section{Preliminary}
\label{sec:Background and Notation}

%In this section, we briefly introduce a few concepts related to our exposition as an excuse to establish the notation we shall be using throughout this paper. For background material on commutative semigroups the reader can consult \cite{pG01} by Grillet. On the other hand, the monograph \cite{GH06} of Geroldinger and Halter-Koch offers extensive information on factorization theory of atomic monoids.

\medskip
%------------------------------------------------------------------------------------%
%------------------------------------------------------------------------------------%
\subsection{General Notation}

The symbols $\pp$, $\mathbb{N}$, and $\mathbb{N}_0$ denote the sets of standard primes, positive integers, and nonnegative integers, respectively. For any $r \in \rr$ and $S \subseteq \rr$, we denote the set $\{s \in S : s \ge r\}$ by $S_{\ge r}$. For any $m,n \in \zz$ with $m \le n$, we let $\ldb m,n \rdb$ denote the discrete interval from $m$ to $n$. We say that a sequence consisting of real numbers is \emph{finitely supported} if only finitely many terms of the sequence are different from zero, whence whether a real sequence is finitely supported is invariant under finitely many terms. %; in a similar manner we shall use $\qq_{\ge r}$, $\qq_{> r}$, and $\rr_{\ge 0}$. 

For any nonzero rational $q \in \qq^\times$, we call the unique relatively primes $\mathsf{n}(q), \mathsf{d}(q) \in \zz$ such that $q = \mathsf{n}(q)/\mathsf{d}(q)$ and $\mathsf{d}(q) > 0$ the \emph{numerator} and the \emph{denominator} of $q$, respectively. We say that a positive rational is a \emph{unit fraction} if its numerator is $1$. For each set $Q$ consisting of nonzero rationals, we call
\[
    \mathsf{n}(Q) := \{\mathsf{n}(q) : q \in Q\} \quad \text{ and } \quad \mathsf{d}(Q) := \{\mathsf{d}(q) : q \in Q \}
\]
the \emph{numerator set} and \emph{denominator set} of $Q$, respectively. For a prime $p$, the $p$-\emph{adic valuation} on $\qq$ is the map $v_p \colon \qq \to \zz \cup \{\infty\}$ defined as follows: $v_p(0) = \infty$ and $v_p(q) = v_p(\mathsf{n}(q)) - v_p(\mathsf{d}(q))$ for any $q \neq 0$, where for $n \in \nn$ the value $v_p(n)$ is the exponent of the maximal power of $p$ dividing $n$. Thus,
\begin{equation*} \label{eq:p-valuation semiadditivity}
	v_p(q_1 + \dots + q_n) \ge \min\{v_p(q_1), \dots, v_p(q_n) \}
\end{equation*}
for all $q_1, \dots, q_n \in \qq_{> 0}$. %We say that a Puiseux monoid $M$ is \emph{finite} if there exists a finite subset $P$ of~$\pp$ such that $v_p(q) \ge 0$ for all $p \in \pp \setminus \! P$ and $q \in M^\bullet$.
%\smallskip

\medskip
%------------------------------------------------------------------------------------%
%------------------------------------------------------------------------------------%
\subsection{Commutative Monoids}

Throughout this paper, we tacitly assume that every monoid mentioned is commutative and cancellative. Also, unless we specify otherwise, each monoid $M$ in this paper is written additively ($+$ and $0$ denote the binary operation and the identity element of $M$, respectively). We let $M^\bullet$ denote the set of nonzero elements of $M$. Also, we let $\uu(M)$ denote the group of invertible elements of $M$, and we say that $M$ is \emph{reduced} if the group $\uu(M)$ is trivial. For any $b,c \in M$, we say that $c$ \emph{divides} $b$ in $M$ and write $c \mid_M b$ if there exists $d \in M$ such that $b = c + d$; in this case we write $c \mid_M b$. The monoid $M$ is called a \emph{valuation monoid} if for all $b,c \in M$ either $b \mid_M c$ or $c \mid_M b$. For any subset $S$ of $M$, we let $\langle S \rangle$ denote the submonoid of $M$ generated by $S$, and we say that $M$ is \emph{finitely generated} if $M = \langle S \rangle$ for some finite subset $S$ of $M$.
\smallskip

Since $M$ is cancellative, it can be minimally embedded into an abelian group $\gp(M)$ and such an abelian group, which is unique up to isomorphism, is called the \emph{Grothendieck group} of $M$. The monoid~$M$ is said to be \emph{torsion-free} if its Grothendieck group is torsion-free. The \emph{rank} of~$M$ is defined to be the rank of its Grothendieck group as a $\zz$-module or, equivalently, the dimension of the $\qq$-vector space $\qq \otimes_\zz \gp(M)$. A monoid is called a \emph{positive} if it is isomorphic to a submonoid of the nonnegative cone of a linearly orderable abelian group.
\smallskip

An element $a \in M \setminus \uu(M)$ is an \emph{atom} if whenever $a = b + c$ for some $b,c \in M$, either $b \in \uu(M)$ or $c \in \uu(M)$. The set of atoms of $M$ is denoted by $\mathcal{A}(M)$. Observe that $\mathcal{A}(M)$ is a subset of any generating set of $M$ when $M$ is reduced. If $\mathcal{A}(M)$ is the empty set, then $M$ is said to be \emph{antimatter} (the term antimatter was coined by Coykendall, Dobbs, and Mullins~\cite{CDM99} in the setting of integral domains). An element $b \in M$ is said to be \emph{atomic} if either $b$ is invertible or $b$ can be written as a sum of finitely many atoms (allowing repetitions). Following Cohn~\cite{pC68}, we say that $M$ is atomic provided that every element of $M$ is atomic. A subset $I$ of $M$ is called an \emph{ideal} of $M$ provided that $I+M \subseteq I$, and an ideal of the form $b+M$ for some $b \in M$ is called a \emph{principal ideal}. The monoid $M$ is said to satisfy the \emph{ascending chain condition on principal ideals} (ACCP) provided that for any sequence $(I_n)_{n \ge 1}$ of principal ideals of $M$ that is increasing under set-inclusion, there exists $m \in \nn$ such that $I_n = I_m$ for all $n \ge m$. It is well known and not hard to show that every monoid that satisfies the ACCP is atomic.
\smallskip

Assume throughout the rest of this section that $M$ is an atomic monoid. The free commutative monoid on $\mathcal{A}(M/\uu(M))$ is denoted by $\mathsf{Z}(M)$, and the elements of $\mathsf{Z}(M)$ are called \emph{factorizations}. Let $\phi \colon \mathsf{Z}(M) \to M/\uu(M)$ be the unique monoid homomorphism fixing $a + \uu(M)$ for all $a \in \mathcal{A}(M)$. We say that $z = a_1 \cdots a_\ell \in \mathsf{Z}(M)$ is a \emph{factorization} of $b \in M$ if $\phi(z) = b + \uu(M)$, in which case $\ell$ is referred to as the \emph{length} of $z$ and is denoted by $|z|$. For each $b \in M$, set
\[
	\mathsf{Z}(b) := \phi^{-1}(b + \uu(M)) \subseteq \mathsf{Z}(M).
\]
%Note that $M$ is atomic if and only if $\mathsf{Z}(b)$ is nonempty for all $b \in M$. 
If $\mathsf{Z}(b)$ is a singleton for all $b \in M$, then $M$ is called a \emph{unique factorization monoid} (UFM). More generally, if $\mathsf{Z}(b)$ is finite for all $b \in M$, then $M$ is called a \emph{finite factorization monoid} (FFM). Every finitely generated monoid is an FFM \cite[Proposition~2.7.8]{GH06}. For each $b \in M$, set
\[
	\mathsf{L}(b) := \{|z| : z \in \mathsf{Z}(b)\}.
\]
If $\mathsf{L}(b)$ is finite for all $b \in M$, then $M$ is called a \emph{bounded factorization monoid} (BFM). It follows from the definitions that every FFM is a BFM, and it is well known that every BFM satisfies the ACCP \cite[Corollary~1]{fH92}. The system $\{\mathsf{L}(b) : b \in M\}$ has been fairly investigated during the past few years for Puiseux monoids $M$ (see \cite{ACHP07,CGLM11,GS18} for numerical monoids and \cite{GG25,fG19,GO20} for Puiseux monoids). For each pair $(b,\ell) \in M \times \nn_0$, we set
\[
	\mathsf{Z}_\ell(b) := \{ z \in \mathsf{Z}(b) : |z| = \ell \}.
\]
If $\mathsf{Z}_\ell(b)$ is finite for all pairs $(b,\ell) \in M \times \nn_0$, then $M$ is called a \emph{length-finite factorization monoid} (LFFM). It follows from the definitions that a monoid is an FFM if and only if it is both a BFM and an LFFM, whence we can think of the property of being an LFFM and that of being a BFM properties to be complementary with respect to the property of being an FFM. The notion of an LFFM was introduced by Geroldinger and Zhong~\cite{GZ21}, and it was further investigated by Jiang, Kanungo, and Kim~\cite{JKK24} recently.
\smallskip

One of the most elementary families of atomic monoids is the class of numerical monoids. A \emph{numerical monoid} is a co-finite submonoid of the additive monoid~$\nn_0$. Each numerical monoid has a unique minimal generating set, which is finite. Thus, each numerical monoid is an FFM. If $\{a_1, \dots, a_n\}$ is the minimal generating set of a numerical monoid $N$, then $\mathcal{A}(N) = \{a_1, \dots, a_n\}$ and $\gcd(a_1, \dots, a_n) = 1$. Let~$N$ be a numerical monoid that is not $\nn_0$ with minimal generating set $\mathcal{A}(N) = \{a_1, \dots, a_n\}$. The \emph{Frobenius number} of $N$, denoted in this paper by $\mathsf{f}(N)$, is the minimum $n \in \nn$ such that $\zz_{\ge n+1} \subseteq N$. Although a general explicit formula for $\mathsf{f}(N)$ in terms of $\{a_1, \dots, a_n \}$ is unknown, it is not difficult to show that $\mathsf{f}(N) = a_1 a_2 - a_1 - a_2$ when $n=2$, and a formula when $n=3$ has been established by Tripathi in~\cite{aT17}.

\bigskip
%------------------------------------------------------------------------------------%
%-------------------------------------DENSITY----------------------------------------%
%------------------------------------------------------------------------------------%
\section{Density of Puiseux Monoids}
\label{sec:general facts of DPM}

In the first part of this section, we briefly discuss homomorphisms and isomorphisms between Puiseux monoids. As previously noted, the density of a Puiseux monoid is determined by its behavior in any neighborhood of $0$, or equivalently, around any generating set. These and other related matters are also discussed in this section.

\medskip
%------------------------------------------------------------------------------------%
%------------------------------------------------------------------------------------%
\subsection{Homomorphisms and Isomorphisms}

Since the standard operation of addition is continuous with respect to the Euclidean topology, the additive operations of Puiseux monoids and their homomorphisms are also continuous. This significantly restricts the set of homomorphisms between Puiseux monoids. In this first section, we describe the homomorphisms between Puiseux monoids, and then we propose an isomorphism criterion we will use later.

\begin{lem} \label{lem:isomorphism between PM}
	The homomorphisms between two Puiseux monoids are precisely those given by rational multiplication.
\end{lem}

\begin{proof}
	Let $M$ and $M'$ be two Puiseux monoids, and let $\alpha \colon M \to M'$ be a homomorphism. If $\alpha$ is the trivial homomorphism, then it is multiplication by $0$. Therefore let us assume that $\alpha$ is not the trivial homomorphism, which implies that $M \neq \{0\}$. As $N = M \cap \nn_0$ is a nontrivial submonoid of the additive monoid of nonnegative integers, it has a nonempty minimal set of generators, namely $\{s_1, \dots, s_k\}$. Because $\alpha$ is not the zero homomorphism, there exists $j \in \ldb 1,k \rdb$ such that $\alpha(s_j) \neq 0$. Set $a := \frac{\alpha(s_j)}{s_j}$. For $q \in M^\bullet$ and $n_1, \dots, n_k \in \nn_0$ such that $\mathsf{n}(q) = n_1 s_1 + \dots + n_k s_k$, the fact that $s_i \alpha(s_j) = \alpha(s_i s_j) = s_j \alpha(s_i)$ for each $i \in \ldb 1,k \rdb$ implies that
	\[
		\alpha(q) = \frac 1{\mathsf{d}(q)} \alpha(\mathsf{n}(q)) = \frac 1{\mathsf{d}(q)} \sum_{i=1}^k n_i \alpha (s_i) = \frac 1{\mathsf{d}(q)} \sum_{i=1}^k n_i s_i \frac{\alpha (s_j)}{s_j} = qa.
	\]
	Hence the homomorphism $\alpha$ is multiplication by the rational $a$. On the other hand, it follows immediately that, for all $r \in \qq_{> 0}$, the map $M \to M'$ defined by $q \mapsto rq$ is a homomorphism as long as $rM \subseteq M'$.
\end{proof}

The rest of this section is dedicated to establishing the existence of infinitely many non-isomorphic Puiseux monoids whose sets of atoms are dense in $\rr_{\ge 0}$. If two Puiseux monoids $M$ and $M'$ are isomorphic, we write $M \cong M'$. It follows, from the next lemma, that two Puiseux monoids are isomorphic if and only if they are rational multiples of each other.

\begin{lem} \label{lem:non-isomorphic PM}
	Let $P$ and $Q$ be infinite sets of primes in $\mathbb{P}$ that are disjoint, and let $M_P := \langle a_p : p \in P \rangle$ and $M_Q := \langle b_q : q \in Q \rangle$ be Puiseux monoids such that for all $p \in P$ and $q \in Q$, $\mathsf{d}(a_p)$ and $\mathsf{d}(b_q)$ are powers of $p$ and $q$, respectively. Then $M_P \ncong M_Q$.
\end{lem}

\begin{proof}
	Suppose, by way of contradiction, that $M_P \cong M_Q$. By Lemma~\ref{lem:isomorphism between PM}, there exists $r \in \qq_{> 0}$ such that $M_Q = r M_P$. If $p$ is a prime in $P$ such that $p \nmid \mathsf{n}(r)$, then $r b_p$ would be an element of $M_P$ such that $\mathsf{d}(r b_p)$ is a power of $p$ and, therefore, $p \in Q$. But this contradicts the fact that $P \cap Q$ is empty.
\end{proof}

\medskip
%------------------------------------------------------------------------------------%
%------------------------------------------------------------------------------------%
\subsection{General Facts on the Density of Puiseux Monoids}
As mentioned in the introduction, a Puiseux monoid is dense if and only if it is topologically dense in $\rr_{\ge 0}$ with respect to the inherited standard topology. We proceed to show that a Puiseux monoid $M$ is dense if and only if $0$ is a limit point of $M^\bullet$. 

\begin{prop} \label{prop:characterization of dense PM}
	An additive submonoid of $\qq_{\ge 0}$ is a dense Puiseux monoid if and only if $0$ is a limit point of its subset of nonzero elements.
\end{prop}

\begin{proof}
	The forward implication is immediate. Suppose, conversely, that $M$ is a Puiseux monoid such that $0$ is a limit point of $M^\bullet$. Let $(r_n)_{n \ge 1}$ be a sequence in $M^\bullet$ converging to $0$. Fix any $p \in \rr_{> 0}$. To check that $p$ is a limit point of $M$, fix $\epsilon > 0$. Because $\lim r_n = 0$, there exists $n \in \nn$ such that $r_n < \min\{p, \epsilon\}$. Let $m$ be the maximum integer such that $p - mr_n > 0$, and take $r = mr_n$. By the maximality of $m$,
	\[
		0 < p - r  = p - (m+1) r_n + r_n \le r_n < \epsilon.
	\]
	Thus, for any arbitrary $\epsilon \in \rr_{>0}$, we have found $r \in M \setminus \{p\}$ such that $|p-r|< \epsilon$, whence $p$ is a limit point of $M^\bullet$. We conclude that $M$ is a dense Puiseux monoid.
\end{proof}

The following corollary follows directly from the fact that a Puiseux monoid $M$ is a BFM and so an atomic monoid provided that~$0$ is not a limit point of $M^\bullet$ (see \cite[Proposition~4.5]{fG19}).

\begin{cor}
	Every non-atomic Puiseux monoid is topologically dense in $\rr_{\ge 0}$.
\end{cor}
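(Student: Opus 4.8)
The plan is to argue by contraposition, leveraging the two facts already at our disposal: the observation (recorded earlier in the paper) that a Puiseux monoid failing to have $0$ as a limit point is atomic, together with Proposition~\ref{prop:characterization of dense PM}, which identifies dense Puiseux monoids with the topologically dense submonoids of $\rr_{\ge 0}$.

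First I would fix an arbitrary non-atomic Puiseux monoid $M$. Applying the contrapositive of the cited fact, the failure of atomicity forces $0$ to be a limit point of $M$: for if $0$ were \emph{not} a limit point of $M$, then $M$ would be atomic, contradicting our standing assumption. Since having $0$ as a limit point is precisely the defining condition for a Puiseux monoid to be dense, we conclude that $M$ is a dense Puiseux monoid.

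Finally I would invoke Proposition~\ref{prop:characterization of dense PM}: because $M$ is a dense Puiseux monoid, it is topologically dense in $\rr_{\ge 0}$ with respect to the inherited standard topology, which is exactly the asserted conclusion. There is no genuine obstacle here, as the argument is just a short chain of implications; the only point meriting care is ensuring the underlying fact—that a Puiseux monoid without $0$ as a limit point is atomic—is itself in place. That fact follows from the observation that if $0$ is not a limit point, then some $\epsilon > 0$ bounds every nonzero element of $M$ away from $0$, so the lengths of potential factorizations are bounded and an atomic decomposition of each element is obtained by descent. With this in hand, the corollary is immediate.
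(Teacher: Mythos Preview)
Your argument is correct and matches the paper's own justification: the corollary is deduced directly from the contrapositive of the fact that a Puiseux monoid without $0$ as a limit point is atomic, together with Proposition~\ref{prop:characterization of dense PM}. The extra sketch you give for why that underlying fact holds is more detail than the paper provides, but it is accurate and the overall approach is the same.
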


To generate a dense Puiseux monoid, it suffices to take a sequence of positive rationals having $0$ as a limit point. Let $\mathcal{P} = \{P_n : n \in \nn\}$ be a collection of infinite subsets of primes in $\mathbb{P}$ such that $P_i \cap P_j$ is empty for $i \neq j$. Now for each $j \in \nn$, consider the Puiseux monoid $M_j = \big\langle \frac1p : p \in P_j \big\rangle$. Because every $P_j$ is infinite, each $M_j$ is dense. Moreover, $M_i \ncong M_j$ for $i \neq j$; this is an immediate consequence of Lemma~\ref{lem:non-isomorphic PM}. Hence we can conclude that there are countably many non-isomorphic dense Puiseux monoids. We proceed to produce a class of atomic Puiseux monoids whose sets of atoms are dense in $\rr_{\ge 0}$. %It turns out that there are infinitely many atomic Puiseux monoids satisfying this stronger density condition. %Without further ado, let us present the main result of this section.

\begin{theorem} \label{thm:existence of atomically dense PM}
	There are infinitely many non-isomorphic Puiseux monoids whose sets of atoms are dense in $\rr_{\ge 0}$.
\end{theorem}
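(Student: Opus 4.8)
The plan is to build, from each member of an infinite collection of pairwise disjoint infinite sets of primes, a single atomic Puiseux monoid whose set of atoms is dense in $\rr_{\ge 0}$, and then to invoke Lemma~\ref{lem:non-isomorphic PM} to conclude that these monoids are pairwise non-isomorphic. Thus the essential task is to produce, from an arbitrary infinite set of primes $P$, an atomic Puiseux monoid $M$ with $\mathcal{A}(M)$ dense in $\rr_{\ge 0}$ and with the denominator of each generator a power of a distinct prime in $P$.

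To do this I would first fix an enumeration $\{t_n : n \in \nn\}$ of $\qq_{> 0}$ and choose the generators greedily. At the $n$-th stage, using that $P$ is infinite, I would select a prime $p_n \in P$ exceeding all previously chosen primes and large enough that some reduced fraction $k/p_n$ lies within $1/n$ of $t_n$; such a fraction exists because for large $p_n$ the fractions with denominator $p_n$ are spaced by $1/p_n$, and a multiple of $p_n$ can be avoided by shifting the numerator by one. Setting $g_{p_n} = k/p_n$ and $M = \langle g_{p_n} : n \in \nn \rangle$, density of $\mathcal{A}(M)$ will follow once atomicity is known: given $t \in \rr_{\ge 0}$ and $\epsilon > 0$, infinitely many $t_n$ fall within $\epsilon/2$ of $t$, and choosing among them an index with $1/n < \epsilon/2$ yields $|g_{p_n} - t| < \epsilon$.

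The crux of the argument, and the step I expect to be most delicate, is showing that every generator $g_{p_m}$ is an atom, so that $\mathcal{A}(M) = \{g_{p_n} : n \in \nn\}$. Suppose $g_{p_m} = \sum_n c_n g_{p_n}$ were a factorization of length at least two. If the coefficient $c_m$ were positive the right-hand side would strictly exceed $g_{p_m}$, which is impossible, so $c_m = 0$. Applying the $p_m$-adic valuation $\mathsf{v}_{p_m}$, every surviving term $c_n g_{p_n}$ with $n \neq m$ has nonnegative valuation because $p_m \nmid \mathsf{d}(g_{p_n}) = p_n$; hence by \eqref{eq:p-valuation semiadditivity} the right-hand side has valuation at least $0$, while $\mathsf{v}_{p_m}(g_{p_m}) = -1$, a contradiction. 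Therefore each $g_{p_n}$ is irreducible and $M$ is atomic with dense atom set.

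Finally I would partition $\pp$ into countably many pairwise disjoint infinite sets $\{P^{(m)} : m \in \nn\}$ and run the construction over each $P^{(m)}$, obtaining atomic Puiseux monoids $M^{(m)}$ whose atom sets are dense in $\rr_{\ge 0}$ and whose generators have denominators that are primes in $P^{(m)}$. Because the sets $P^{(m)}$ are infinite and pairwise disjoint, Lemma~\ref{lem:non-isomorphic PM} gives $M^{(i)} \ncong M^{(j)}$ for all $i \neq j$, yielding the required infinite family of non-isomorphic Puiseux monoids with dense sets of atoms.
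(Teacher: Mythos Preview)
Your proposal is correct and follows essentially the same strategy as the paper: approximate a dense sequence of rationals by fractions whose denominators are powers of pairwise distinct primes drawn from a fixed infinite set, observe that such a generating set forces atomicity, and then apply Lemma~\ref{lem:non-isomorphic PM} across a countable family of pairwise disjoint prime sets. The only notable differences are cosmetic: the paper allows arbitrary prime powers $p_{jk}^{n_k}$ in the denominators (after first checking that $\{m/p^n : p\nmid m\}$ is dense for each fixed prime), whereas you work entirely with first powers $p_n$ and pick them large enough directly; and you spell out the atomicity step via the $p_m$-adic valuation, while the paper simply asserts it.
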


\begin{proof}
	Let $S$ be the set consisting of all the nonzero elements of the valuation Puiseux monoid $\nn_0\big[\frac1p\big]$, the Puiseux monoid consisting of all nonnegative $p$-adic integers. It is clear that $S$ is dense in $\rr$ is dense in $\rr_{\ge 0}$ for every $p \in \pp$.
    %S := \big\{ \frac{m}{p^n} : m,n \in \nn \text{ and } p \nmid m \big\}$.
    
    Fix $\ell \in \rr_{> 0}$ and let us verify that $\ell$ is a limit point of $S$ in $\rr_{\ge 0}$. For this, fix $\epsilon > 0$ and then take $m,n \in \nn$ such that $\frac1{p^n} < \epsilon$ and $\frac{m}{p^n} < \ell \le \frac{m+1}{p^n}$. It follows immediately $s := \frac{m}{p^n}$ of $S$ satisfies that $\big|\ell - \frac{m}{p^n}| < \epsilon$. Since $\epsilon$ was arbitrarily taken, $\ell$ is a limit point of $S$. It is obvious that $0$ is also a limit point of $S$. Hence $S$ is dense in $\rr_{\ge 0}$.
	
	Now take $(r_n)_{n \ge 1}$ to be a sequence of positive rationals with underlying set $R$ dense in $\rr_{\ge 0}$. Also, consider the collection $\mathcal{P} := \{P_n : n \in \nn\}$ of infinite sets of primes such that $P_i \cap P_j$ is empty for $i \neq j$. For each $j \in \nn$, let $P_j := \{p_{jk} : k \in \nn\}$. Now for every $j \in \nn$ and $p_{jk} \in P_j$ the set
	\[
		\bigg\{\frac m{p_{jk}^n} : m,n \in \nn \text{ and } p \nmid m \bigg\}
	\]
	is dense in $\rr_{\ge 0}$. Therefore, for every natural $k$, there exist $m_k, n_k \in \nn$ such that  the inequality $\big| r_k - \frac{m_k}{p_{jk}^{n_k}} \big| < \frac1k$ holds. Consider the Puiseux monoid
	\begin{equation} \label{eq:PM with dense set of atoms}
		M_j = \bigg \langle \frac{m_k}{p_{jk}^{n_k}} : k \in \nn \bigg \rangle.
	\end{equation}
	Because distinct generators in \eqref{eq:PM with dense set of atoms} have powers of distinct primes in their denominators, it follows that $M_j$ is atomic and $\mathcal{A}(M_j) = \big\{ \frac{m_k}{p_{jk}^{n_k}} : k \in \nn \big\}$. Finally, we are led to verify that $\mathcal{A}(M_j)$ is dense in $\rr_{\ge 0}$. To complete this, take $x \in \rr_{\ge 0}$ and then fix $\epsilon > 0$. Since $R$ is dense in $\rr_{\ge 0}$, there exists $k \in \nn$ large enough such that $\frac1k < \frac{\epsilon}2$ and $|x - r_k| < \frac{\epsilon}2$. Consequently, $\big| r_k - \frac{m_k}{p_{jk}^{n_k}} \big| < \frac1k < \frac{\epsilon}2$, which implies that
	\[
		\Big{|} x - \frac{m_k}{p_{jk}^{n_k}} \Big{|} < |x - r_k| + \Big{|} r_k - \frac{m_k}{p_{jk}^{n_k}} \Big{|} < \epsilon.
	\]
	This means that $\mathcal{A}(M_j)$ is dense in $\rr_{\ge 0}$, as desired. By Lemma~\ref{lem:non-isomorphic PM}, the Puiseux monoids in $\mathcal{P}$ are pairwise non-isomorphic. %Therefore there exist infinitely many atomic Puiseux monoids whose sets of atoms are dense in $\rr_{\ge 0}$.
\end{proof}

\bigskip
%------------------------------------------------------------------------------------%
%-----------------------------PRIME RECRIPROCAL MONOIDS------------------------------%
%------------------------------------------------------------------------------------%
\section{$k$-Prime Reciprocal Monoids}
\label{sec:k-prime Reciprocal PM}

For a nonempty set $P$ consisting of primes, it seems that the Puiseux monoid $M_P := \langle \frac1{p} : p \in P\big\rangle$ was first considered in the setting of commutative ring theory~\cite[Example~2.1]{AAZ90} (for $P = \pp$). In the mentioned example, $M_P$ is the monoid of exponents of the monoid algebra $F[M]$ (where $F$ was taken to be a field), which was proved to be an integral domain that satisfies the ACCP but is not a BFD (see~\cite[Proposition~4.2]{fG22}). Our main purpose in this section is to discuss a natural generalization of the monoids $M_P$ and then provide insight into the atomicity of this generalized family.  We note that another generalization of the prime reciprocal Puiseux monoid was recently considered in~\cite[Proposition~3.1]{GG25} by Geroldinger and the second author in connection with the length-sets of monoid algebras not having the bounded factorization property.
	
\begin{defin}
	For an infinite set $P$ consisting of primes in $\mathbb{P}$ and $k \in \nn$, a \emph{$k$-prime reciprocal} (\emph{Puiseux}) \emph{monoid} over $P$ is a Puiseux monoid generated by a set $S$ such that $\mathsf{d}(s)$ is the product of $k$ distinct primes in $P$ for all $s \in S$.
\end{defin}

%Observe that the $1$-primary monoids are precisely the primary monoids. 
For a set $S$ and a nonnegative integer $n$, we let $\binom Sn$ denote the collection of subsets of $S$ of cardinality~$n$. The \emph{elementary} $k$-prime reciprocal monoid over an infinite set of primes $P$ is the monoid
\[
	M_{P,k} = \langle S_k \rangle, \text{ where } S_k = \bigg\{\frac 1{p_1 \cdots p_k} : \{p_1, \dots, p_k\} \in \binom{P}{k} \bigg\}.
\]
We call the elements of $S_k$ the \emph{elementary generators} of $M_{P,k}$. We now show that while the elementary 1-prime reciprocal monoid is atomic, for each $k \ge 2$, every elementary $k$-prime reciprocal monoid is antimatter.

\begin{theorem} \label{thm:k-prime reciprocal monoids}
	Let $P$ be an infinite set consisting of primes and, for $k \in \nn$, consider the elementary $k$-prime reciprocal monoid $M_{P,k}$. Then the following statements hold.
	\begin{enumerate}
		\item If $k=1$, then $M_{P,k}$ satisfies the ACCP and so is atomic.
		\smallskip
		
		\item If $k \ge 2$, then $M_{\pp,k}$ is antimatter.
	\end{enumerate}
\end{theorem}

\begin{proof}
	(1) This is well known (see \cite[Proposition~3.1]{GG25}).
	\smallskip
	
	(2) First, we show that for every natural $N$ and distinct primes $p$ and $q$ there exist $m,n \in \nn$ and $p', q' \in \pp$ with $q' > p' > N$ such that
	\begin{equation} \label{eq:k-primary failing to be atomic}
		p'q' = mqq' + npp' + pq.
	\end{equation}
	Let $p$ and $q$ be such two distinct primes. Since $\gcd(p,q) = 1$, Dirichlet's theorem on arithmetic progressions of primes ensures the existence of $m \in \nn$ such that $p' = mq + p$ is a prime greater than~$N$. Dirichlet's theorem comes into play again to yield a natural $n$ such that $q' = np' + q$ is a prime. Therefore one finds that
	\[
		p' q' = mqq' + pq' = mqq' + p(np' + q) = mqq' + npp' + pq.
	\]
	Now consider the elementary $2$-prime reciprocal monoid $M_2 := M_{\pp,2}$. If $p$ and $q$ are distinct primes, then by the argument given above, there exist $m,n \in \nn$ and $p',q' \in \pp$ satisfying $q' > p' > \max\{p,q\}$ such that the identity \eqref{eq:k-primary failing to be atomic} holds. Dividing both sides of \eqref{eq:k-primary failing to be atomic} by $pqp'q'$, we obtain
	\[
		\frac{1}{pq} = m\frac{1}{pp'} + n\frac{1}{qq'} + \frac{1}{p'q'}.
	\]
	As a consequence, no element of $M_2$ can be an atom, which means that $M_2$ is an antimatter Puiseux monoid.
	
	At this point we are in a position to check the more general fact that, for each $k \ge 2$, the elementary $k$-prime reciprocal monoid $M_k \coloneqq M_{\mathbb{P},k}$ is antimatter. To do this, fix an arbitrary elementary generator $(p_1 \cdots p_k)^{-1}$ of $M_k$. As before, there exist $m,n \in \nn$ and $p', q' \in \pp$ satisfying that $q' > p' > \max\{p_1, \dots, p_k\}$ and
	\begin{equation} \label{eq:k-primary failing to be atomic 2}
		\frac{1}{p_1p_2} = m\frac{1}{p_1p'} + n\frac{1}{p_2q'} + \frac{1}{p'q'}.
	\end{equation}
	Multiplying both sides of \eqref{eq:k-primary failing to be atomic 2} by $(p_3 \cdots p_k)^{-1}$, one obtains the following equality:
	\[
		\frac{1}{p_1p_2 \cdots p_k} = m\frac{1}{p_1p'p_3 \cdots p_k} + n\frac{1}{p_2q'p_3 \cdots p_k} + \frac{1}{p'q'p_3 \cdots p_k}.
	\]
	Hence no element of $M_k$ can be an atom and, thus, $M_k$ is antimatter.
\end{proof}

The next example sheds some light upon the fact a monoid generated by infinitely many elementary generators of the elementary $k$-prime reciprocal monoid may not be antimatter.

\begin{ex} \label{ex:elementary k-primary PM-like 1}
	Let $(p_n)_{n \ge 1}$ be a sequence whose underlying set is $\pp$. For $k \in \nn$, consider the $k$-prime reciprocal monoid
	\[
		M = \Big\langle a_n := \prod_{i=1}^k\frac 1{p_{nk+i}} : n \in \nn_0 \Big\rangle.
	\]
	Because, for each index $n \in \nn$, the prime $p_{nk+1} \mid \mathsf{d}(a_m)$ if and only if $m = n$, we can use the $p_{nk+1}$-adic valuation map to argue that $a_n$ is an atom of $M$. As a consequence, $M$ is an atomic monoid whose sets of atoms is
    \[
        \mathcal{A}(M) = \big\{a_n : n \in \nn_0 \big\}.
    \]
    Moreover, since the terms of the sequence $(\mathsf{d}(a_n))_{n \ge 0}$ are pairwise relatively primes, the Puiseux monoid $M$ must satisfy the ACCP \cite[Proposition~3.1]{GG25}. Finally, we can see that $M$ does not have the bounded factorization property: for each $n \in \nn$, we can write $1 = \mathsf{d}(a_n)a_n$ (because $\mathsf{n}(a_n) = 1$) and so $\{\mathsf{d}(a_n) : n \in \nn \} \subseteq \mathsf{L}(1)$ (indeed, it is not difficult to verify that this inclusion is an equality).
    %divides exactly one element of the set $\mathsf{d}(A)$, it follows that every element of $A$ is an atom of $M$. Therefore $M$ is atomic.
\end{ex}

As Example~\ref{ex:elementary k-primary PM-like 2} illustrates, a $k$-prime reciprocal monoid generated by multiples of the elementary generators of $M_{\pp,k}$ may be atomic.

\begin{ex} \label{ex:elementary k-primary PM-like 2}
	Let $(p_n)_{n \ge 1}$ be a sequence whose underlying set is $\pp$. For $k \in \nn$, consider the following $k$-prime reciprocal monoid:
	\[
		M = \Big\langle a_S := \sum_{s \in S} \frac 1{p_s} : S \in \binom{\nn}{k} \Big\rangle.
	\]
	Observe that $M$ is a submonoid of $M_{\pp,k}$. For each subset $S \in \binom{\nn}k$, the element $\frac1{p_s}$ belongs to the prime reciprocal monoid $M_\pp = \big\langle \frac1p : p \in \pp \big\rangle$ for every $s \in S$, whence $a_S = \sum_{s \in S} \frac1{p_s} \in M_\pp$. Therefore $M$ is a submonoid of $M_\pp$. It follows from Theorem~\ref{thm:k-prime reciprocal monoids} that $M_\pp$ satisfies the ACCP, and so the fact that $M_\pp$ is a reduced monoid ensures that every submonoid of $M_\pp$ also satisfies the ACCP. In particular, $M$ satisfies the ACCP and must be atomic.
\end{ex}

\bigskip
%%%%%%%%%%%%%%%%%%%%%%%%%%%%%%%%
%%%%%%%%%%%%%%%%%%%%%%%%%%%%%%%%
\section{$p$-adic Puiseux Monoids and their Internal Sums}
\label{sec:p-adic PM}

For a prime $p$, the Puiseux monoid $\nn_0\big[ \frac1p\big]$ is clearly  an antimatter valuation monoid.  The submonoids of the Puiseux monoid $\nn_0\big[ \frac1p\big]$ are the central algebraic objects of this section. Let us introduce some convenient terminology.

\begin{defin}
	Let $p$ be a prime. We say that a Puiseux monoid $M$ is \emph{$p$-adic} if $M$ is a submonoid of $\nn_0\big[\frac1p\big]$ or, equivalently, $v_q(M) \subseteq \mathbb{N}_0$ for every $q \in \pp \setminus \{p\}$.
\end{defin}

We use the term $p$-\emph{adic monoid} as a shorthand for $p$-adic Puiseux monoid. Throughout this section, each time we define a $p$-adic monoid by specifying a sequence of generators $(r_n)_{n \ge 1}$, we shall implicitly assume that $(\mathsf{d}(r_n))_{n \ge 1}$ diverges to infinity; this assumption comes without loss of generality because in order to generate a Puiseux monoid, we only need to repeat each denominator finitely many times. On the other hand, $\lim \mathsf{d}(r_n) = \infty$ does not affect the generality of the results we prove in this section.  To see this, if $(\mathsf{d}(r_n))_{n \ge 1}$ is a bounded sequence, then the $p$-adic monoid generated by $(r_n)_{n \ge 1}$ is finitely generated and, therefore, isomorphic to a numerical monoid.

\medskip
%------------------------------------------------------------------------------------%
%------------------------------------------------------------------------------------%
\subsection{Atomicity of $p$-Adic Monoids}

Although $\nn_0\big[ \frac1p \big]$ is antimatter, it contains plenty of submonoids with diverse and complex atomic behavior. In this section, we delve into the atomicity of $p$-adic monoids.

Strongly bounded Puiseux monoids were considered in~\cite{GG18}. A set $Q$ consisting of rationals is called \emph{strongly bounded} if $\mathsf{n}(Q)$ is bounded, and a Puiseux monoid is called \emph{strongly bounded} if it can be generated by a strongly bounded set of rationals. Strongly bounded $p$-adic monoids happen to have only finitely many atoms, as the next proposition indicates.

\begin{prop} \label{prop:strongly bounded p-adic monoids has finitely many atoms}
	A strongly bounded $p$-adic monoid has only finitely many atoms.
\end{prop}

\begin{proof}
	For $p \in \pp$, let $M$ be a strongly bounded $p$-adic monoid. Let $(r_n)_{n \ge 1}$ be a generating sequence for $M$ with underlying set $R$ satisfying that $\mathsf{n}(R) = \{n_1, \dots, n_k\}$ for some $k, n_1, \dots, n_k \in \nn$. For each $i \in \ldb 1,k \rdb$, set $R_i := \{r_n : \mathsf{n}(r_n) = n_i\}$ and $M_i = \langle R_i \rangle$. The fact that $R \subseteq M_1 \cup \dots \cup M_k$, along with $\mathcal{A}(M) \cap M_i \subseteq \mathcal{A}(M_i)$, implies that
	\[
		\mathcal{A}(M) \subseteq \bigcup_{i=1}^k \mathcal{A}(M_i).
	\]
	Thus, showing that $\mathcal{A}(M)$ is finite amounts to verifying that $|\mathcal{A}(M_i)| < \infty$ for each $i \in \ldb 1,k \rdb$. Fix $i \in \ldb 1,k\rdb$. If $M_i$ is finitely generated, then $|\mathcal{A}(M_i)| < \infty$. Let us assume, therefore, that $M_i$ is not finitely generated. This means that there exists a strictly increasing sequence $(\alpha_n)_{n \ge 1}$ such that $M_i = \big\langle \frac{n_i}{p^{\alpha_n}} : n \in \nn \big\rangle$. Because $\frac{n_i}{p^\alpha_n} = p^{\alpha_{n+1} - \alpha_n}\big( \frac{n_i}{p^{\alpha_{n+1} }}\big)$, the monoid $M_i$ satisfies that $|\mathcal{A}(M_i)| = 0$. Hence we conclude that $\mathcal{A}(M)$ is finite. 
\end{proof}

We are now in a position to give a necessary condition for the atomicity of $p$-adic monoids.

\begin{theorem} \label{thm:necessary condition for the atomicity of p-adic PM}
	For $p \in \pp$, let $M$ be a $p$-adic monoid with $\mathcal{A}(M) = \{r_n : n \in \nn\}$ and $\lim r_n = 0$. If~$M$ is atomic, then $\lim \mathsf{n}(r_n) = \infty$ (i.e., the set $\mathsf{n}(\mathcal{A}(M))$ is not bounded).
\end{theorem}

\begin{proof}
	Set $a_n \coloneqq \mathsf{n}(r_n)$ and $p^{\alpha_n} \coloneqq \mathsf{d}(r_n)$ for every natural $n$. Suppose, by way of contradiction, that $\lim a_n \neq \infty$. Then there exists $m \in \nn$ such that $a_n = m$ for infinitely many $n \in \nn$. For each positive divisor $d$ of $m$ we define the Puiseux monoid
	\[
		M_d := \langle S_d \rangle, \ \text{ where } \ S_d =  \bigg\{ \frac{a_{k_n}}{p^{\alpha_{k_n}}} : a_{k_n} = m \ \text{or} \ \gcd(m, a_{k_n}) = d \bigg\}.
	\]
	Observe that $\mathcal{A}(M)$ is included in the union of the $M_d$. In addition, for each positive divisor $d$ of $m$, the inclusion $\mathcal{A}(M) \cap M_d \subseteq \mathcal{A}(M_d)$ holds and so
	\begin{equation} \label{eq:inclusion of atoms}
		\mathcal{A}(M) \subseteq \bigcup_{d \mid m} \mathcal{A}(M_d).
	\end{equation}
	Because $\mathcal{A}(M)$ contains infinitely many atoms, the inclusion \eqref{eq:inclusion of atoms} implies the existence of a positive divisor~$d$ of $m$ such that $|\mathcal{A}(M_d)| = \infty$. Set $N_d = \frac 1d M_d$. Since $d$ divides $\mathsf{n}(q)$ for all $q \in M_d$, it follows that $N_d$ is also a $p$-adic monoid. In addition, the fact that $N_d$ is isomorphic to $M_d$ implies that $|\mathcal{A}(N_d)| = |\mathcal{A}(M_d)| = \infty$. After setting $b_n := \frac{a_{k_n}}d$ and $\beta_n = \alpha_{k_n}$ for every $n \in \nn$ such that either $a_{k_n} = m$ or $\gcd(m, a_{k_n}) = d$, we see that
	\[
		N_d = \bigg\langle \frac{b_n}{p^{\beta_n}} : n \in \nn \bigg\rangle.
	\]
	As $a_n = m$ for infinitely many $n \in \nn$, the sequence $(\beta_n)_{n \ge 1}$ is an infinite subsequence of $(\alpha_n)_{n \ge 1}$ and, therefore, it diverges to infinity. In addition, as $\lim \frac{a_n}{p^{\alpha_n}} = 0$, it follows that $\lim \frac{b_n}{p^{\beta_n}} = 0$.
	
	Now we argue that $\mathcal{A}(N_d)$ is finite, yielding the desired contradiction. Take $m' = \frac{m}d$. Since there are infinitely many $n \in \nn$ such that $b_n = m'$, it is guaranteed that $\frac{m'}{p^n} \in N_d$ for every $n \in \nn$. In addition, $\gcd(m', b_n) = 1$ for each $b_n \neq m'$. If $b_n \neq m'$ for only finitely many $n$, then $N_d$ is strongly bounded and Proposition~\ref{prop:strongly bounded p-adic monoids has finitely many atoms} ensures that $\mathcal{A}(N_d)$ is finite. Suppose otherwise that $\gcd(b_n, m') = 1$ (i.e., $b_n \neq m'$) for infinitely many $n \in \nn$. For a fixed index $i \in \nn$ with $b_i \neq m'$ take an index $j \in \nn$ such that $\gcd(b_j, m') = 1$ and large enough so that $b_i p^{\beta_j - \beta_i} > b_j m'$ (the existence of such an index $j$ is guaranteed by the fact that $\lim \frac{b_n}{p^{\beta_n}} = 0$). Since $b_i p^{\beta_j - \beta_i} > b_j m' > \mathsf{f}(\langle b_j, m'\rangle)$, we can take $c,c' \in \nn$ such that $b_i p^{\beta_j - \beta_i} = c b_j + c' m'$, which means that
	\[
		\frac{b_i}{p^{\beta_i}} = c \frac{b_j}{p^{\beta_j}} + c' \frac{m'}{p^{\beta_j}}.
	\]
	As $\frac{b_j}{p^{\beta_j}}, \frac{m'}{p^{\beta_j}} \in N_d^\bullet$, it follows that $\frac{b_i}{p^{\beta_i}} \notin \mathcal{A}(N_d)$. Because the index $i$ was arbitrarily taken, $N_d$ is antimatter. In particular, $\mathcal{A}(N_d)$ is finite, which leads to a contradiction.
\end{proof}

With the notation as in the statement of Theorem~\ref{thm:necessary condition for the atomicity of p-adic PM}, the $p$-adic monoid $M$ may fail to be atomic even when the conditions $\lim r_n = 0$ and $\lim \mathsf{n}(r_n) = \infty$ hold simultaneously. The next example sheds some light upon this observation.

\begin{ex}
    For each $n \in \nn_0$, we let $F_n$ denote the $n$-th Fermat number: $F_n := 2^{2^n} + 1$. Then we consider the following Puiseux monoid:
	\begin{equation} \label{eq:p-adic not atomic}
		M := \Big\langle \frac{F_n - 2}{2^{2^{n+1}}}, \frac{F_n}{2^{2^{n+1}}} : n \in \nn \Big\rangle.
	\end{equation}
    Observe that, for each $n \in \nn$,
    \[
		\frac{1}{2^{2^n-1}} = \frac{(2^{2^n} - 1) + \big( 2^{2^n} + 1\big)}{2^{2^{n+1}}} = \frac{F_n - 2}{2^{2^{n+1}}} + \frac{F_n}{2^{2^{n+1}}} \in M.
    \]
    This clearly implies that $\frac1{2^n} \in M$ for every $n \in \nn$. Therefore the monoid $M$ contains the valuation Puiseux monoid $\nn_0\big[\frac1{2}\big]$. We proceed to argue that %$M$ is a non-atomic monoid with
    \begin{equation} \label{eq:set of atoms of Fermat monoid}
        \mathcal{A}(M) = \Big\{ \frac{F_n - 2}{2^{2^{n+1}}} : n \in \nn \Big\}.
    \end{equation}
    Since $M$ is reduced, $\mathcal{A}(M)$ must be contained in the defining generating set $\Big\{ \frac{F_n - 2}{2^{2^{n+1}}}, \frac{F_n}{2^{2^{n+1}}} : n \in \nn \Big\}$ of $M$. On the other hand, for each $n \in \nn$, we see that $\frac1{2^{2^{n+1}}} \in \nn_0\big[\frac12\big] \subseteq M$ and so the element $\frac{F_n}{2^{2^{n+1}}}$ is not an atom of $M$ because
	\[
		\frac{F_n}{2^{2^{n+1}}} = 2\frac1{2^{2^{n+1}}} + \frac{F_n - 2}{2^{2^{n+1}}}.
	\]
    Thus, $\mathcal{A}(M) \subseteq \big\{ \frac{F_n - 2}{2^{2^{n+1}}} : n \in \nn \big\}$. Now fix an arbitrary $k \in \nn$, and let us verify that $\frac{F_k - 2}{2^{2^{k+1}}} \in \mathcal{A}(M)$. Since the sequence $\big(\frac{F_n - 2}{2^{2^{n+1}}}\big)_{n \ge 1}$ is strictly decreasing, $\frac{F_j - 2}{2^{2^{j+1}}} \nmid_M \frac{F_k - 2}{2^{2^{k+1}}}$ for any $j \in \ldb 1,k-1\rdb$, whence proving that $\frac{F_k - 2}{2^{2^{k+1}}}$ is an atom amounts to writing
    \begin{equation} \label{eq:k-th atom Fermat's monoid}
        \frac{F_k - 2}{2^{2^{k+1}}} = \sum_{j=k}^n b_j \frac{F_j - 2}{2^{2^{j+1}}}
    \end{equation}
    for some index $n \in \nn$ with $n \ge k$ and coefficients $b_k, \dots, b_n \in \nn_0$ with $b_k \in \{0,1\}$ and showing that $b_k=1$. Observe that we can rewrite~\eqref{eq:k-th atom Fermat's monoid} as follows
    \begin{equation} \label{eq:k-th atom Fermat's monoid I}
        - b_k \frac{F_k - 2}{2^{2^{k+1}}} + \frac1{2^{2^{k+1}}} \prod_{i=0}^{k-1} F_i = \sum_{j=k+1}^n b_j \frac{1}{2^{2^{i+1}}} \prod_{i=0}^{j-1} F_i.
    \end{equation}
    Let $p$ be an (odd) prime divisor of $F_k$, and observe that the $p$-adic valuation of the right-hand side of~\eqref{eq:k-th atom Fermat's monoid I} is positive as each summand has a factor $F_k$. This implies that $b_k \neq 0$ as otherwise the $p$-adic valuation of the left-hand side of~\eqref{eq:k-th atom Fermat's monoid I} would be zero. Hence the set of atoms of $M$ is that described in~\eqref{eq:set of atoms of Fermat monoid}, also it is clear that $\lim \frac{F_n - 2}{2^{2^{n+1}}} = 0$ while $\lim (F_n-2) = \infty$. Finally, let us verify that $M$ is not atomic by showing that we cannot write $1 \in \nn_0\big[\frac12\big] \subset M$ as a sum of finitely many atoms. Indeed, for each $n \in \nn$ the presence of the factor $F_0 = 3$ in $\frac{F_n - 2}{2^{2^{n+1}}} = 2^{-2^{n+1}}\prod_{i=0}^{n-1} F_i$ ensures that the $3$-adic valuation of every atom of $M$ is positive and so the same holds for every atomic element of $M$.
    % Assume, towards a contradiction, that we can write
    % \begin{equation} \label{eq:1 is not atomic}
    %     1 = \sum_{j=1}^n c_j \frac{F_j - 2}{2^{2^{j+1}}} = \sum_{j=1}^n \frac{c_j}{2^{2^{j+1}}} \prod_{i=0}^{j-1} F_i
    % \end{equation}
    % for some index $n \in \nn$ and coefficients $c_1, \dots, c_n$. Note that $n \ge 2$ as $F_0 = 3$ and the $3$-adic valuation of the right-hand side of~\eqref{eq:1 is not atomic} must be zero.
\end{ex}

Next we establish both a necessary and a sufficient condition for the atomicity of $p$-adic monoids having generating sets whose numerators are powers of the same prime. 

\begin{theorem}
	Let $p$ and $q$ be two different primes, and let $M = \langle r_n : n \in \nn \rangle$ be a $p$-adic monoid such that $\mathsf{n}(r_n)$ is a power of $q$ for every $n \in \nn$. Then the following statements hold.
	\begin{enumerate}
		\item If $M$ is atomic, then $\lim \mathsf{n}(r_n) = \infty$.
		\smallskip

		\item If $\lim \mathsf{n}(r_n) = \infty$ and $(r_n)_{n \ge 1}$ is decreasing, then $M$ is atomic.
	\end{enumerate}
\end{theorem}

\begin{proof}
	(1) Define the sequences $(\alpha_n)_{n \ge 1}$ and $(\beta_n)_{n \ge 1}$ such that $p^{\alpha_n} = \mathsf{d}(r_n)$ and $q^{\beta_n} = \mathsf{n}(r_n)$. Suppose, by way of contradiction, that $\lim \mathsf{n}(r_n) \neq \infty$. Therefore there is a natural $j$ such that $\mathsf{n(r_n)} = q^j$ for infinitely many $n \in \nn$. This implies that $\frac{q^j}{p^n} \in M$ for every $n \in \nn$. Thus, for every $x \in M^\bullet$ such that $\mathsf{n}(x) = q^m \ge q^j$, one can write
	\[
		x = \frac{q^m}{\mathsf{d}(x)} = pq^{m-j} \frac{q^j}{p \mathsf{d}(x)} \notin \mathcal{A}(M).
	\]
	As a result, every $a \in \mathcal{A}(M)$ satisfies that $\mathsf{n}(a) < q^j$. This immediately implies that $\mathcal{A}(M)$ is finite. Because $M$ is atomic with $|\mathcal{A}(M)| < \infty$, it must be finitely generated, which is a contradiction.
	\smallskip
	
	(2) Consider the subsequence $(k_n)_{n \ge 1}$ of positive integers satisfying that $\mathsf{n}(r_{k_n}) < \mathsf{n}(r_i)$ for every $i > k_n$. It follows immediately that the sequence $(\mathsf{n}(r_{k_n}))_{n \ge 1}$ is increasing. We claim that the Puiseux monoid $M := \langle r_{k_n} : n \in \nn \rangle$. Suppose that $j \notin \{k_n :n \in \nn \}$. Because $\lim \mathsf{n}(r_n) = \infty$ there are only finitely many indices $i \in \nn$ such that $\mathsf{n}(r_i) \le \mathsf{n}(r_j)$, and it is easy to see that the maximum of such indices, say $m$, belongs to $\{k_n : n \in \nn \}$. As $r_i = p^{\alpha_m - \alpha_i}q^{\beta_i - \beta_m} r_m$, it follows that $r_i \in \langle r_{k_n} : n \in \nn \rangle$. Hence $M = \langle r_{k_n} : n \in \nn \rangle$. Therefore it suffices to show that $r_{k_n} \in \mathcal{A}(M)$ for every $n \in \nn$. If
	\begin{equation} \label{eq:sufficient condition for atomicity of p-adic PM}
		\frac{q^{\beta_{k_n}}}{p^{\alpha_{k_n}}} = \sum_{i=1}^t c_i \frac{q^{\beta_{k_i}}}{p^{\alpha_{k_i}}},
	\end{equation}
	for some $t, c_1, \dots, c_t \in \nn_0$, then $t \ge n$, $c_1 = \dots = c_{n-1} = 0$, and $c_n \in \{0,1\}$. If $c_n = 0$, then by applying the $q$-adic valuation map to both sides of \eqref{eq:sufficient condition for atomicity of p-adic PM} we immediately obtain a contradiction. Thus, $c_n = 1$, which implies that $r_{k_n}$ is an atom. Hence $M$ is atomic.
\end{proof}

\medskip
%------------------------------------------------------------------------------------%
%------------------------------------------------------------------------------------%
\subsection{Sum of $p$-Adic: Back to the Elementary $1$-Prime Reciprocal}

Let $M$ be a Puiseux monoid. It follows that $\{\mathsf{n}(q) : q \in M\}$ is a submonoid of $M$ that is contained in $\nn_0$: we call this submonoid the \emph{numerator submonoid} of $M$. Notice that the numerator submonoid of a Puiseux monoid~$M$ is $\nn_0 \cap M$.

Let $f \colon \pp \to \nn_0$ be a function such that $p \nmid f(p)$ if $f(p) \neq 0$. Then we can consider the internal sum $\sum_{p \in \pp} \frac{f(p)}p \nn_0$ inside $\qq_{\ge 0}$ of the free commutative (Puiseux) monoids $\frac{f(p)}p \nn_0$ over the set of primes $\pp$, which is the Puiseux monoid $M_f$ defined via $f$ as follows:
\begin{equation} \label{eq:PM induced by a function f}
	M_f := \Big\langle \frac{f(p)}p : p \in \pp \Big\rangle.
\end{equation}
We call $M_f$ the Puiseux monoid \emph{induced} by the function $f$. The \emph{support} of $f$ is defined to be $\text{supp} \, f := \{p \in \pp : f(p) \neq 0\}$. One can readily check that $M_f$ is an atomic monoid with set of atoms
\begin{equation} \label{eq:set of atoms of M_f}
	\mathcal{A}(M_f) = \Big\{ \frac{f(p)}p : p \in \text{supp} f \Big\}.
\end{equation}

One important behavior of the monoid $M_f$ is that every element of $M_f$ has an elegant and helpful sum decomposition, as we show in the following proposition.

\begin{prop} \label{prop:CSD of 1-prime reciprocal PMs}
	Let $f \colon \pp \to \nn_0$ be a function with support $P$ such that $p \nmid f(p)$ for any $p \in P$, and let $M_f$ be the Puiseux monoid induced by $f$. Each $q \in M_f$ can be uniquely written as follows:
	\begin{equation} \label{eq:canonical decomposition PRPM}
		q = c_0(q) + \sum_{p \in P} c_p(q) \frac{f(p)}{p},
	\end{equation}
	where $c_0(q) \in \mathsf{n}(M)$ and all but finitely many terms of the coefficient sequence $(c_p(q))_{p \in P}$ equal zero and $c_p(q) \in \ldb 0, p-1 \rdb$ for all $p \in P$.
\end{prop}	

\begin{proof}
	To simplify notation, set $M := M_f$ and $N := \mathsf{n}(M_f)$. To argue the existence of the sum decomposition in~\eqref{eq:canonical decomposition PRPM}, first observe that in light of the atomicity of $M$, consider the set $\mathcal{S}_q$ of pairs $(c_0, (c_p)_{p \in P})$, where $c_0 \in N$ and $(c_p)_{p \in P}$ is a sequence whose terms are nonnegative integers such that
	\begin{equation} \label{eq:CSD aux existence}
		q = c_0 + \sum_{p \in P} c_p \frac{f(p)}{p}.
	\end{equation}
	Observe that $\mathcal{S}_q$ is nonempty because $M$ is atomic with $\mathcal{A}(M) = \big\{ \frac{f(p)}p : p \in P \big\}$ and $0 \in N$. It is clear that the first entry of any pair in $\mathcal{S}_q$ is at most $q$. Among all such pairs, choose $(c_0(q), (c_p(q))_{p \in P})$ maximizing the value of the first entry. Let us check that with this choice $c_p(q) < p$ for all $p \in P$: indeed, if $c_{p}(q) \ge p$ for some $p \in P$, after replacing $c_0(q)$ and $c_{p}(q))$ respectively by $c_0(q) + f(p) \in N$ and $c_{p}(q) - p \in \nn_0$, we obtain another sequence in $\mathcal{S}_q$ whose first term is strictly larger than $c_0(q)$, which is not possible. Hence, the chosen pair $(c_0(q), (c_p(q))_{p \in P})$ yields a sum decomposition as that in~\eqref{eq:canonical decomposition PRPM}.
	
	For the uniqueness of the sum decomposition, we can assume that the pair $(c'_0(q), (c'_p(q))_{p \in P})$ yields a sum decomposition of $q$ as that on the right-hand side of~\eqref{eq:canonical decomposition PRPM}. Therefore, for each $p \in P$, after clearing denominators in
	\[
		(c_0(q) - c'_0(q)) + \sum_{p \in P} (c_p(q) - c'_p(q)) \frac{f(p)}p = 0,
	\]
	we obtain that $p \mid c_p(q) - c'_p(q)$ and so $c_p(q) = c'_p(q)$. Thus, $c_0(q) = c'_0(q)$. Hence we find that every $q \in M$ can be uniquely written as in~\eqref{eq:canonical decomposition PRPM}, which concludes our proof.
\end{proof}

The sum decomposition in \eqref{eq:canonical decomposition PRPM} turns out to be quite helpful, so we introduce the following convenient terminology.

\begin{defin}
	Let $f \colon \pp \to \nn_0$ be a function with support $P$ such that $p \nmid f(p)$ for any $p \in P$, and let $M_f$ be the Puiseux monoid induced by $f$. For each element $q \in M_f$, we call the right-hand side of~\eqref{eq:canonical decomposition PRPM} the \emph{canonical sum decomposition} of $q$ in $M_f$.
	\begin{itemize}
		\item We let $c_0 : M_f \to \textsf{n}(M_f)$ be the function defined by the assignments $c_0 \colon q \mapsto c_0(q)$ for all $q \in M_f$, where $c_0(q)$ is as in~\eqref{eq:canonical decomposition PRPM}. 
		\smallskip
		
		\item For each $p \in P$, we let $c_p \colon M_f \to \nn_0$ be the function defined by the assignments $c_p \colon q \mapsto c_p(q)$ for all $q \in M_f$, where $c_p(q)$ is as in~\eqref{eq:canonical decomposition PRPM}.
	\end{itemize}
\end{defin}

The coefficients in the canonical sum decomposition of $M_f$ have the following desirable behavior with respect to divisibility.

\begin{prop} \label{prop:CSD of PRPM and divisibility}
	Let $f \colon \pp \to \nn_0$ be a function with support $P$ such that $p \nmid f(p)$ for any $p \in P$, and let $M_f$ be the Puiseux monoid induced by $f$. For $r,s \in M_f$ such that $r \mid_{M_f} s$, the following statements hold.
	\begin{enumerate}
		\item $c_0(r) \mid_{\mathsf{n}(M_f)} c_0(s)$.
		\smallskip
		
		\item If $c_p(r) > c_p(s)$ for some $p \in P$, then $c_0(r) + f(p) \le c_0(s)$.
	\end{enumerate}
\end{prop}

\begin{proof}
	For simplicity, set $M := M_f$ and $N := \mathsf{n}(M_f)$. Write $s = q+r$ for some $q \in M$. Observe that for each $p \in P$, the inequalities $0 \le c_p(q), c_p(r) < p$ allow us to pick $b_p \in \{0,1\}$ such that $0 \le c_p(q) + c_p(r) - b_p p < p$. Now write
	\begin{align*} 
		s  = q+r &= c_0(q) + c_0(r) + \sum_{p \in P} (c_p(q) + c_p(r)) \frac{f(p)}p  \\
					  &= \Big( c_0(q) + c_0(r) + \sum_{p \in P} b_p f(p) \Big) + \sum_{p \in P} \big( c_p(q) + c_p(r) - b_p \big) \frac{f(p)}p.
	\end{align*}

	(1) From the fact that $f(p) \in N$ for all $p \in P$, we deduce $c_0(q) + c_0(r) + \sum_{p \in P} b_p f(p) \in N$. In addition, $c_p(q) + c_p(r) - b_p \in \ldb 0, p-1 \rdb$ for all $p \in P$, whence the uniqueness of the canonical sum decomposition~\eqref{eq:canonical decomposition PRPM} ensures that
	\[
		c_0(s) = c_0(q) + c_0(r) + \sum_{p \in P} b_p f(p) \quad \text{ and } \quad c_p(s) = c_p(q) + c_p(r) - b_p
	\]
	for all $p \in P$. Thus, since $f(p) \in N$ for all $p \in P$, the equality $c_0(s) = c_0(q) + c_0(r) + \sum_{p \in P} b_p f(p)$ implies that $c_0(r) \mid_N c_0(s)$.
	\smallskip
	
	(2) Now assume that $c_p(r) > c_p(s)$ for some $p \in P$. Then $c_p(s) + b_p = c_p(q) + c_p(r) \ge c_p(r) > c_p(s)$, which implies that $b_p = 1$. Therefore,
    \[
		c_0(r) + f(p) = c_0(r) + b_p f(p) \le c_0(q) + c_0(r) + \sum_{p \in P} b_pf(p) = c_0(s).
	\]
\end{proof}

We proceed to show an application of the canonical sum decompositions of Puiseux monoids $M_f$ induced by functions $f \colon \pp \to \nn_0$.  We argue that such monoids satisfy the ACCP.

\begin{theorem} \label{thm:ACCP monoids}
	Let $f \colon \pp \to \nn_0$ be a function with support $P$ such that $p \nmid f(p)$ for any $p \in P$, and let~$M_f$ be the Puiseux monoid induced by $f$. Then $M_f$ satisfies the ACCP.
\end{theorem}

\begin{proof}
	For simplicity, set $M := M_f$ and $N := \mathsf{n}(M_f)$. Since $N$ is a numerical monoid, it must satisfy the ACCP.
    
    To argue that $M$ satisfies the ACCP, fix an ascending chain $(q_n + M)_{n \ge 1}$ of principal ideals of~$M$. For each $n \in \nn$, the fact that $q_{n+1} \mid_M q_n$, along with part~(1) of Proposition~\ref{prop:CSD of PRPM and divisibility} ensures that $c_0(q_{n+1}) \mid_N c_0(q_n)$. Thus, $(q_n + N)_{n \ge 1}$ is an ascending chain of principal ideals in $N$, and so it must stabilize. Hence, after dropping finitely many terms from $(q_n + M)_{n \ge 1}$, we can assume that $c_0(q_n) = c_0(q_1)$ for every $n \in \nn$. 
    
    Now consider the function $\sigma \colon M \to \nn_0$ defined as follows: $\sigma(q) := \sum_{p \in P} c_p(q)$ for all $q \in M$. It follows now from part~(2) of Proposition~\ref{prop:CSD of PRPM and divisibility} that $c_p(q_{n+1}) \le c_p(q_n)$ for every $n \in \nn$. Therefore, if the strict inclusion $q_n + M \subsetneq q_{n+1} + M$ holds for some $n \in \nn$, then the equality $c_0(q_{n+1}) = c_0(q_n)$ implies that $\sigma(q_n) > \sigma(q_{n+1})$. This, along with the fact that $\sigma(q) \ge 0$ for all $q \in M$, ensures that the set $\{n \in \nn : q_n + M \subsetneq q_{n+1} + M\}$ is finite, which is equivalent to the fact that the chain $(q_n + M)_{n \ge 1}$ stabilizes. Thus, $M$ satisfies the ACCP.
\end{proof}

It turns out that we can make choice of the function $f$ inducing Puiseux monoids $M_f$ with various factorization behavior.

\begin{ex}
    Consider the function $f \colon \pp \to \nn_0$ defined as $f(p) = 1$ for every $p \in \pp$. The Puiseux monoid induced by $f$ is the $1$-prime reciprocal $M_\pp$:
    \[
        M_f = M_\pp = \Big\langle \frac1{p} : p \in \pp \Big\rangle.
    \]
    We have seen in~\eqref{eq:set of atoms of M_f} that $\mathcal{A}(M) = \big\{ \frac1p : p \in \pp \big\}$. Also, it follows from Theorem~\ref{thm:ACCP monoids} or part~(1) of Theorem~\ref{thm:k-prime reciprocal monoids} that the Puiseux monoid $M_f$ satisfies the ACCP.  However, $M_f$ is not a BFM: for instance, for each $p \in \pp$, the element $1 \in M_f$ can be written as the sum of $p$ copies of the atom $\frac1p$, and so the inclusion $\pp \subseteq \mathsf{L}(1)$ holds (indeed, we can readily check that $\mathsf{L}(1) = \pp$ \cite[Proposition~3.1]{GG25}).
\end{ex}

Let us make a choice of $f$ such that $M_f$ is an FFM that is neither a UFM nor a finitely generated monoid.

\begin{ex}
    Now consider the function $f \colon \pp \to \nn_0$ defined as $f(p) = p-1$ for every $p \in \pp$. Then the function $f$ induces the following Puiseux monoid:
    \[
        M_f = \Big\langle 1 - \frac1p : p \in \pp \Big\rangle,
    \]
    whose set of atoms is $\big\{ \frac1p : p \in \pp \big\}$ as pointed out in~\eqref{eq:set of atoms of M_f}, and so $M_f$ cannot be finitely generated. As the set of atoms can be listed increasingly, it follows from \cite[Theorem~5.6]{fG19} that $M_f$ is an FFM. Finally, $M_f$ is not a UFM as for any distinct $p,q \in \pp$ the inclusion $\{q(p-1), p(q-1)\} \subseteq \mathsf{L}((p-1)(q-1))$ holds: in fact, the element $(p-1)(q-1)$ can be written in $M_f$ as the sum of $q(p-1)$ copies of the atom $1 - \frac1q$ or as the sum of $p(q-1)$ copies of the atom $1 - \frac1p$.
\end{ex}

We conclude this section with two examples of dense Puiseux monoids that are the internal sum of $p$-adic monoids. The first of such examples is a monoid that satisfies the ACCP but it not a BFM.

\begin{ex}
    Let $f \colon \pp \to \nn_0$ be a bounded function whose support $P$ is infinite such that $p \nmid f(p)$ for any $p \in P$. We claim that $M_f$ is a monoid that satisfies the ACCP but is not a BFM. The fact that $M_f$ satisfies the ACCP follows from Theorem~\ref{thm:ACCP monoids}. To argue that $M_f$ is not a BFM, take $n \in \nn$ such that the set $L := \{p \in P : f(p) = n \}$ is infinite (which can be done because $f$ is bounded), and observe that $n$ is an element of $M_f$ such that $L \subseteq \mathsf{L}(n)$: indeed, $n = p \frac{f(p)}p$ for all $p \in L$. Since $L$ is an infinite, so is $\mathsf{L}(n)$, which ensures that $M_f$ is not a BFM.
\end{ex}

This last example is a Puiseux monoid that has the bounded factorization property but not the finite factorization property. Although this monoid is also the internal sum of $p$-adic monoids, we were not able to find a canonical sum decomposition.

\begin{ex}
	Let $P$ be the set of odd primes, and consider the Puiseux monoid
	\[
		M := \langle A \rangle, \ \text{ where }  \ A = \bigg\{ \frac {\lfloor p/2 \rfloor}p, \frac {p-\lfloor p/2 \rfloor}p : p \in P \bigg\}.
	\]
	Clearly, $M$ is an atomic Puiseux monoid. In addition, it is not hard to verify that $\mathcal{A}(M) = A$. Observe that the element $1 \in M$ has infinitely many factorizations in $M$: indeed,
	\[
		1 = \frac {\lfloor p/2 \rfloor}p + \frac {p-\lfloor p/2 \rfloor}p 
	\]
	for every $p \in P$. Hence $M$ is not an FFM. Also, notice that $a \ge \frac13$ for every $a \in \mathcal{A}(M)$. This implies that no element $q \in M$ can be the sum of more than $\lfloor 3q \rfloor$ atoms: $\mathsf{L}(q) \subseteq \ldb 1, \lfloor 3q \rfloor \rdb$. Because $|\mathsf{L}(q)| < \lfloor 3q \rfloor$ for all $q \in M$, the Puiseux monoid $M$ is a BFM that is not an FFM.
\end{ex}

\medskip
%------------------------------------------------------------------------------------%
%------------------------------------------------------------------------------------%
\subsection{Sum of $p$-Adic Monoids Valuation Monoids}

Next we discuss the internal sum of $p$-adic valuation monoids. Throughout this section, for any $p \in \pp$, we let $M_p$ denote the underlying additive monoid of the rational cyclic semiring $\nn_0\big[\frac1p\big]$. Therefore, for each $p \in \pp$, the monoid $M_p$ is the Puiseux monoid consisting of all nonnegative $p$-adic rationals, and so $M_p$ is a rank-$1$ valuation monoid. In addition, for a finite nonempty set $P$ consisting of primes, we let $M_P$ denote the internal sum over $P$ of the Puiseux monoids $M_p$:
\begin{equation} \label{eq:the monoid M_P}
	M_P := \sum_{p \in P} M_p.
\end{equation}
The Puiseux monoids $M_P$ are the central objects we study in this section. It turns out that, inside these Puiseux monoids, each element has a convenient decomposition as a canonical sum, which we will describe in detail, as we proceed.

\begin{prop} \label{prop:canonical decomposition}
	Let $P$ be a finite nonempty set consisting of primes, and let $M_P$ be as in~\eqref{eq:the monoid M_P}. Then each $q \in M_P$ can be written uniquely as follows:
	\begin{equation} \label{eq:canonical representation for internal sum of p-valuation}
		q = c_0 \ + \sum_{(p,n) \in P \times \nn} c_{p,n} \frac1{p^n},
	\end{equation}
	where $c_0 \in \nn_0$ and, for each $p \in P$, the sequence $(c_{p,n})_{n \ge 1}$ consists of nonnegative integer coefficients almost all being zero such that $0 \le c_{p,n} < p$ for every $n \in \nn$.
\end{prop}

\begin{proof}
	Fix $q \in M_P$. The existence and uniqueness of the sum decomposition~\eqref{eq:canonical representation for internal sum of p-valuation} when $q=0$ is clear. Thus, we assume that $q > 0$.
	
	Because $M_P$ is the internal sum of the monoids $M_p$, we can write $q = \sum_{p \in P} m_p$, where $m_p \in M_p$ for every $p \in P$. For each $p \in P$ such that $m_p > 0$, we can write $m_p = \frac{n(p)}{p^{e(p)}}$ for some $n(p) \in \nn$ and $e(p) \in \nn_0$ such that $p \nmid n(p)$. Thus, after assuming that $\sum_{n \in \nn_0} c'_{p,n} p^n$ is the representation of $n(p)$ in base $p$, we obtain that
	\[
		m_p = \frac{n(p)}{p^{e(p)}} = \frac1{p^{e(p)}} \sum_{n \in \nn_0} c'_{p,n} p^n = \sum_{n \ge e(p)} c'_{p,n} p^{n-e(p)} + \sum_{n < e(p)} c'_{p,n} \frac{1}{p^{e(p)-n}} = c_{p,0} + \sum_{n=1}^{e(p)} c_{p,n} \frac1{p^n}, 
	\]
	where $c_{p,0} :=  \sum_{n \ge e(p)} c'_{p,n} p^{n-e(p)} \in \nn_0$ and $c_{p,k} := c'_{p, e(p)-k}$ for every $k \in \ldb 1,e(p) \rdb$. Hence the existence of \eqref{eq:canonical representation for internal sum of p-valuation} follows after taking $c_0 := \sum_{p \in P} c_{p,0}$ and $c_{p,n} = 0$ for all $(p,n) \in P \times \nn$ with $n > e(p)$.
	
	We proceed to argue the uniqueness of~\eqref{eq:canonical representation for internal sum of p-valuation}. For this, suppose that we can write some $q \in M_P$ with $q > 0$ as in~\eqref{eq:canonical representation for internal sum of p-valuation} and also as follows:
	\begin{equation} \label{eq:uniquness aux}
		q = b_0 \ + \sum_{(p,n) \in P \times \nn} b_{p,n} \frac1{p^n},
	\end{equation}
	where, for each $p \in P$, the sequence $(b_{p,n})_{n \ge 1}$ consists of nonnegative integers almost all being zero such that $b_{p,n} \in \ldb 0, p-1 \rdb$ for every $n \in \nn$. Suppose, towards a contradiction, that the sum decompositions of $q$ in~\eqref{eq:canonical representation for internal sum of p-valuation} and~\eqref{eq:uniquness aux} are not equal. In this case, we can pick $p \in P$ such that the sequences $(b_{p,n})_{n \ge 1}$ and $(c_{p,n})_{n \ge 1}$ are distinct even though
    \[
        \sum_{n \in \nn} b_{p,n} \frac1{p^n} = \sum_{n \in \nn} c_{p,n} \frac1{p^n}.
    \]
    Therefore, after taking $m := \max\{n \in \nn : b_{p,n} \neq c_{p,n} \}$, we see that
    \[
        c_{p,m} - b_{p,m} = \sum_{n=1}^{m-1} (b_{p,n} - c_{p,n})p^{m-n},
    \]
    which implies that $p \mid c_{p,m} - b_{p,m}$. However, this contradicts that $0 \le b_{p,m}, c_{p,m} < p$. Thus, the sum decomposition on the right-hand side of~\eqref{eq:canonical representation for internal sum of p-valuation} is unique.
\end{proof}

With notation as in Proposition~\ref{prop:canonical decomposition}, we will see that the sum decomposition in~\eqref{eq:canonical representation for internal sum of p-valuation} is quite helpful to understand divisibility aspects in the monoids $M_P$. Based on this, we introduce the following terminology.

%we introduce the following functions: for each pair $(p,n) \in P \times \nn$, we let $c_0, c_{p,n} \colon M_P \to \nn_0$ be the function defined as follows: $c_0(q) = c_0$ and $c_{p,n}(q) = c_{p,n}$, where for each $q \in M_P$ the nonnegative values $c_0$ and $c_{p,n}$ are the coefficients on the right-hand side of~\eqref{eq:canonical representation for internal sum of p-valuation}. Now we introduce the following terminology.

\begin{defin} \label{def:CSD for the internal sum of prime-valuation PMs}
	Let $P$ be a finite nonempty set consisting of primes, and let $M_P$ be the Puiseux monoid in~\eqref{eq:the monoid M_P}. For each $q \in M_P$, we call the right-hand side of~\eqref{eq:canonical representation for internal sum of p-valuation} the \emph{canonical sum decomposition} of $q$ in $M_P$.
	\begin{itemize}
		\item We let $c_0 \colon M_P \to \nn_0$ be the function defined as follows: $c_0(q) = c_0$ for all $q \in M_P$, where $c_0$ is as in~\eqref{eq:canonical representation for internal sum of p-valuation}.
		\smallskip

        \item For each pair $(p,n) \in P \times \nn$, we let $c_{p,n} \colon M_P \to \nn_0$ be the function defined as follows: $c_{p,n}(q) = c_{p,n}$ for all $q \in M_P$, where $c_{p,n}$ is as in~\eqref{eq:canonical representation for internal sum of p-valuation}.
		
		% \item For each pair $(p,n) \in P \times \nn_0$, we call the function $c_{p,n} \colon M_P \to \nn_0$ the $(p,n)$-\emph{canonical projection} of $M_P$.
	\end{itemize}
\end{defin}

Let us take a look at some basic properties of the canonical sum decomposition we have just introduced for the Puiseux monoids $M_P$ introduced in~\eqref{eq:the monoid M_P}.

\begin{prop}
	Let $P$ be a finite nonempty set of primes, and let $M_P$ be the monoid defined in~\eqref{eq:the monoid M_P}. Then the following statements hold.
	\begin{enumerate}
		\item For each $p \in P$  and $q \in M_P$,
		\[
			v_p(q) \ge 0 \quad \text{if and only if} \quad c_{p,n}(q) = 0 \quad \text{for every} \quad n \in \nn.
		\]
		
		\item For each $(p,n) \in P \times \nn$ and for all $q,r \in M_P$ with $\gcd(\mathsf{d}(q), \mathsf{d}(r)) = 1$,
		\[
			c_{p,n}(q+r) = c_{p,n}(q) + c_{p,n}(r).
		\]
		
		\item For all $q,r \in M_P$ with $\gcd(\mathsf{d}(q), \mathsf{d}(r)) = 1$,
		\[
			c_0(q+r) = c_0(q) + c_0(r).
		\] 
	\end{enumerate}
\end{prop}

\begin{proof}
	(1) Observe that there exists $n \in \nn$ such that $c_{p,n}(q) > 0$ if and only if $v_p(q) = -m$, where $m := \max\{n \in \nn : c_{p,n}(q) > 0\}$.
	\smallskip
	
	(2) Fix a pair $(p,n) \in P \times \nn$, and take $q,r \in M_P$ with $\gcd(\mathsf{d}(q), \mathsf{d}(r)) = 1$. In light of part~(1), the fact that $p$ is a prime factor of at most one of the positive integers $\mathsf{d}(q)$ and $\mathsf{d}(r)$ implies that either $c_{p,n}(q) = 0$ or $c_{p,n}(r) = 0$. Hence $c_{p,n}(q+r) = c_{p,n}(q) + c_{p,n}(r)$.
	\smallskip
	
	(3) As before, take $q,r \in M_P$ with $\gcd(\mathsf{d}(q), \mathsf{d}(r)) = 1$. For any $(p,n) \in  \pp \times \nn$, it follows from part~(2) that $c_{p,n}(q+r) = c_{p,n}(q) + c_{p,n}(r)$. Thus,
	\[
		c_0(q+r) = q+r \ - \sum_{(p,n) \in P \times \nn} c_{p,n}(q+r)\frac1{p^n} = q+r \ - \sum_{(p,n) \in P \times \nn} c_{p,n}(q)\frac1{p^n} \ - \sum_{(p,n) \in P \times \nn} c_{p,n}(r) = c_0(q) + c_0(r).
	\]
\end{proof}

For any $q \in M_P$, given the uniqueness of the canonical sum decomposition, we can also write $q = c_0(q) + \sum_{p \in P} c_p(q)$, where $c_p(q) := \sum_{n \in \nn} c_{p,n}(q) \frac1{p^n}$. Let us now show that the canonical sum decomposition inside the monoids defined in~\eqref{eq:the monoid M_P} behaves well with respect to divisibility.

\begin{prop} \label{prop:CSD and divisibility}
	Let $P$ be a nonempty finite set of primes, and let $M_P$ be the monoid defined in~\eqref{eq:the monoid M_P}. For any $r,s \in M_P$, the following statements hold.
	\begin{enumerate}
		\item  $c_0(r)$ is the largest integer dividing $r$ in $M_P$.
		\smallskip
		
		\item If $r \mid_{M_P} s$, then $c_0(r) \le c_0(s)$.
		\smallskip
		
		\item If $r \mid_{M_P} s$ and $c_p(r) > c_p(s)$ for some $p \in P$, then $c_0(r) < c_0(s)$.
	\end{enumerate}
\end{prop}

\begin{proof}
	(1) It is clear that $c_0(r)$ is a nonnegative integer dividing $r$. In addition, if $n_r$ is a nonnegative integer such that $n_r \mid_{M_P} r$, then we can write $r = n_r + r'$ for some $r' \in M_P$ and observe that
    \[
        r = (n_r + c_0(r')) + \ \sum_{(p,n) \in P \times \nn} c_{p,n}(r')\frac1{p^n}.
    \]
    Thus, from the uniqueness of the canonical sum decomposition of $r$, we obtain that $c_0(r) = n_r + c_0(r')$, and so $n_r \le c_0(r)$. Therefore $c_0(r)$ is the largest integer dividing $r$ in $M_P$.
	\smallskip
	
	(2) Write $s = q+r$ for some $q \in M_P$. Then we can write
	\[
		s = c_0(q) + c_0(r) + \ \sum_{(p,n) \in P \times \nn} (c_{p,n}(q) + c_{p,n}(r)) \frac1{p^n}.
	\]
	As a consequence, $c_0(q) + c_0(r)$ divides $s$ in $M_S$. Therefore, part~(1) guarantees that the inequality $c_0(s) \ge c_0(q) + c_0(r) \ge c_0(r)$ holds.
	\smallskip
	
	(3) Now suppose that $r \mid_{M_P} s$ and $c_p(r) > c_p(s)$ for some $p \in P$. Assume, by way of contradiction, that $c_0(r) \ge c_0(s)$ and so that $c_0(r) = c_0(s)$. Set $r' := s-r \in M_P$ and observe that $c_0(r') = 0$ and we can write $c_p(r) + c_p(r') = n_p + r_p$ for some $n_0 \in \nn_0$ and $r_p \in \nn_0\big[\frac1p\big] \cap (0,1)$, and so the fact that $c_0(r) = c_0(s) \ge n_p + c_0(r)$ ensures that $n_p = 0$. Thus, $c_p(r) + c_p(r') < 1$ and this implies that $c_p(s) = c_p(r) + c_p(r') \ge c_p(r)$, a contradiction.
\end{proof}

We conclude this section proving that for each finite nonempty set $P$ consisting of primes, $M_P$ is an antimatter monoid and also that $M_P$ is a valuation monoid if and only if $|P| = 1$.

\begin{prop} \label{prop:M_P is antimatter and M_P is a ValMon if |P|=1}
	Let $P$ be a finite nonempty set of primes, and let $M_P$ be the monoid defined in~\eqref{eq:the monoid M_P}. Then the following statements hold.
	\begin{enumerate}
		\item $M_P$ is an antimatter.
		\smallskip
		
		\item $M_P$ is a valuation monoid if and only if $|P| = 1$.
	\end{enumerate}
\end{prop}

\begin{proof}
	 (1) As $M_P$ is the internal sum over $P$ of the Puiseux monoids $M_p$, the fact that $M_P$ is antimatter follows immediately from the fact that, for each $p \in P$, the monoid $M_p$ is antimatter: indeed, none of elements in the generating set $\{\frac1{p^n} : n \in \nn_0\}$ of $M_p$ is an atom of $M_p$ because $\frac1{p^n} = p\frac1{p^{n+1}}$ for every $n \in \nn_0$.
	 \smallskip
	 
	 (2) For the reverse implication, it suffices to observe that if $P$ equals a singleton $\{p\}$ for some $p \in \pp$, then $M_P$ is the additive monoid $M_p$ of $\nn_0[\frac1p]$, which is clearly a valuation monoid. For the direct implication, assume that $|P| \ge 2$. Let the pair $(M_P, \mid_{M_P})$ be the partially ordered with underlying set $M_P$ and order relation given by divisibility inside $M_P$. Let us argue the following claim, from which we can immediately complete our proof.
	 \smallskip
	 
	 \noindent \textsc{Claim.} For any distinct $p_1, p_2 \in P$ and $(q_1,q_2) \in M_{p_1} \times M_{p_2}$ with $c_0(q_1) = c_0(q_2)$, the following conditions are equivalent.
	 \begin{enumerate}
	 	\item[(a)] $\{q_1, q_2\}$ is a chain in the poset $(M_P, \mid_{M_P})$.
	 	\smallskip
	 	
	 	\item[(b)] $\{q_1, q_2\}$ intersects $\nn_0$.
	 \end{enumerate}
	 \smallskip
	 
	 \noindent \textsc{Proof of Claim.} (a) $\Rightarrow$ (b): Suppose that $\{q_1, q_2\}$ is a chain in the poset $(M_P, \mid_{M_P})$ and assume, without loss of generality, that $q_1 \mid_{M_P} q_2$. Given the uniqueness of the canonical sum decompositions of $q_1$ and $q_2$ inside $M_P$, the equality $c_{p,n}(q_1) = 0$ holds for all $(p,n) \in P \times \nn$ with $p \neq p_1$ while the equality $c_{p,n}(q_2) = 0$ for all $(p,n) \in P \times \nn$ with $p \neq p_2$. Hence
	 \[
	 	q_1 = c_0(q_1) + c_{p_1}(q_1)  \quad \text{ and } \quad q_2 = c_0(q_2) +  c_{p_2}(q_2), %\frac1{p_2^{e_2}}
	 \]
	 where $c_{p_1}(q_1) = \sum_{n \in \nn} c_{p_1,n}(q_1) \frac1{p_1^n} \in M_{p_1}$ and $c_{p_2}(q_2)  = \sum_{n \in \nn} c_{p_2,n}(q_2) \frac1{p_2^n} \in M_{p_2}$. Since $q_1 \mid_{M_P} q_2$ and $c_0(q_1) = c_0(q_2)$, it follows from part~(3) of Proposition~\ref{prop:CSD and divisibility} that $c_{p_1}(q_1) \le c_{p_1}(q_2) = 0$, which enforces the equality $c_{p_1}(q_1) = 0$. Thus, $q_1 = c_0(q_1) \in \nn_0$ and so $q_1 \in \{q_1, q_2\} \cap \nn_0$.
	 
	 (b) $\Rightarrow$ (a): Now suppose that $\{q_1, q_2\}$ intersects $\nn_0$ and assume, without loss of generality, that $q_1 \in \nn_0$. Then $c_0(q_2) = c_0(q_1) = q_1$ by part~(1) of Proposition~\ref{prop:CSD and divisibility}. Thus, $q_1 = c_0(q_2) \mid_{M_P} q_2$, which implies that $\{q_1,q_2\}$ is a chain in the poset $(M_P, \mid_{M_P})$. Hence the claim is established.
	 \smallskip
	 
	 Finally, we argue that $M_P$ is not a valuation monoid. Take $p_1, p_2 \in P$ with $p_1 \neq p_2$ and then set $q_1 := \frac1{p_1}$ and $q_2 := \frac1{p_2}$. Note that $(q_1,q_2) \in M_{p_1} \times M_{p_2}$ and $c_0(q_1) = c_0(q_2) = 0$. Thus, by virtue of the established claim, the fact that neither $q_1$ nor $q_2$ are integers implies that $q_1 \nmid_{M_P} q_2$ and $q_2 \nmid_{M_P} q_1$. Hence $M$ is not a valuation monoid.
\end{proof}

\bigskip
%------------------------------------------------------------------------------------%
%------------------------------------------------------------------------------------%	
\section{Multiplicatively Closed Puiseux Monoids}
\label{sec:multiplicatively cyclic PM}

The first study of the atomic structure of a family of multiplicatively closed Puiseux monoids was briefly initiated by the second and third authors in~\cite[Section~5]{GG18} and continued by the first three authors in~\cite{CGG20} and by fourth author in~\cite{hP20}, and all these paper focused on the additive monoids of the rational cyclic semiring $\nn_0[q]$. Our purpose in this section is to revisit this class and explore other new classes of Puiseux monoids that are multiplicatively closed.

\medskip
%%%%%%%%%%%%%%%%%%%%%%%%%%%%%%%%%%%%%%%%%%%%%%%%%%%%%%%%%
\subsection{Additive Monoids of Rational Cyclic Semirings}

Let us start by considering, for each $q \in \qq_{> 0}$, the cyclic subsemiring of $\qq_{\ge 0}$ generated by $q$:
\[
	\nn_0[q] := \{f(q) : f(x) \in \nn_0[x]\},
\]
where $\nn_0[x]$ denotes the semiring of polynomials in the indeterminate $x$ over $\nn_0$. We let $M_q$ denote the underlying additive monoid of the semiring $\nn_0[q]$, which is a Puiseux monoid. Observe that the monoid $M_q$ is additively generated by the powers of $q$:
\begin{equation} \label{eq:additive monoid of N_0[q]}
    M_q = \big\langle q^n : n \in \nn_0 \big\rangle.
\end{equation}
One can readily see that if $\mathsf{n}(q) = 1$, then $M_q = \nn_0\big[\frac1d\big]$, where $d := \mathsf{d}(q)$, and so any nonzero $r \in M_q$ can be written as $r = d\big(\frac{r}{d})$ and so $M_q$ is an antimatter monoid. In addition, for any $r,s \in M_q$, the divisibility relation $r \mid_{M_q} s$ holds if and only if $r \le s$, and so $M_q$ is a valuation monoid.
\smallskip

As was the case for various Puiseux monoids we investigated in previous sections, elements inside the monoid $M_q$ have a special sum decomposition (when $q \notin \nn$), which we argue as follows.

\begin{prop} \label{prop:CSD of multiplicative cyclic}
    For $q \in \qq_{>0} \setminus \nn$, let $M_q$ be the underlying additive monoid of the semiring $\nn_0[q]$. Then each $r \in \nn_0[q]$ can be uniquely written as follows:
	\begin{equation} \label{eq:CSD in N[q]}
		r = c_0(r) + \sum_{n \in \nn} c_n(r) q^n,
	\end{equation}
	where $(c_n(r))_{n \ge 0}$ is a sequence with $c_n(r) \in \ldb 0, \mathsf{d}(q)-1 \rdb$ for every $n \in \nn$ such that all but finitely many of whose terms are zero.
\end{prop}

\begin{proof}
    To argue the existential part of the statement, it is convenient to split our argument into two cases.
    \smallskip
    
    \textsc{Case 1:} $\mathsf{n}(q) = 1$. After setting $d := \mathsf{d}(q)$, we see that $M_q = \big\langle \frac1{d^n} : n \in \nn_0 \big\rangle$. Fix $r \in M_q$ and take the minimum $k \in \nn_0$ such that $d^kr \in \nn_0$. It follows from the minimality of $k$ that we can represent $d^kr$ in base $d$ as $d^kr = \sum_{n=0}^k c_n d^{k-n}$ for some coefficients $c_0, \dots, c_k \in \ldb 0,d-1 \rdb$. From this, we deduce that
    \[
        r = \frac1{d^k} \sum_{n=0}^k c_n d^{k-n} = c_0 + \sum_{n=1}^k c_n \frac1{d^n} = c_0 + \sum_{n \in \nn} c_n q^n,
    \]
    where the equality $c_n = 0$ holds for every $n > k$. After setting $c_0(r) := c_0$ and $c_n(r) := c_n$, one obtains the sum decomposition of~\eqref{eq:CSD in N[q]}.
    \smallskip
    
    \textsc{Case 2:} $\mathsf{n}(q) \ge 2$. As $\mathsf{n}(q), \mathsf{d}(q) \ge 2$, it follows from~\cite[Theorem~6.2]{GG18} that the Puiseux monoid~$M_q$ is atomic with set of atoms
    \[
        \mathcal{A}(M_q) = \big\{q^n : n \in \nn_0 \big\}.
    \]
    Fix an element $r \in M_q$, and let us find a sum decomposition of $r$ as that in~\eqref{eq:CSD in N[q]}. Since $M_q$ is atomic, $\mathsf{Z}(r)$ is a nonempty set. For each factorization $z := \sum_{n \in \nn_0} c_n q^n \in \mathsf{Z}(r)$, consider the subset
    \[
        N_z := \{0\} \cup \{n \in \nn : c_n \ge \mathsf{d}(q) \}
    \]
    of $\nn_0$. Take a factorization $z := \sum_{n \in \nn_0} c_n q^n \in \mathsf{Z}(r)$ such that the minimum $m := \min N_z$ is as small as it can possibly be. We claim that $m = 0$. Assume, towards a contradiction, that $m \ge 1$. Since $c_m \ge \mathsf{d}(q)$, we can write $c_m = b_m \mathsf{d}(q) + r_m$ for some $b_m \in \nn$ and $r_m \in \ldb 0, \mathsf{d}(q) - 1 \rdb$. Observe that
    \[
        c_{m-1}q^{m-1} + c_m q^m = c_{m-1}q^{m-1} + (b_m \mathsf{d}(q))q^m + r_m q^m = (c_{m-1} + b_m \mathsf{n}(q))q^{m-1} + r_m q^m.
    \]
    Because $r_m < \mathsf{d}(q)$, after replacing $c_{m-1}q^{m-1} + c_mq^m$ by $(c_{m-1} + b_m \mathsf{n}(q))q^{m-1} + r_m q^m$ in $z$, we obtain a new factorization $w$ such that $\min N_w < m$, which contradicts the minimality of $m$. Hence $m=0$, which implies that the coefficients in the right-hand side of
    \begin{equation} \label{eq:CSD in N[q] I}
        r = c_0 + \sum_{n \in \nn} c_n q^n
    \end{equation}
    are such that $c_n < \mathsf{d}(q)$ for every $n \in \nn$, and so after setting $c_0(r) := c_0$ and $c_n(r) := c_n$, we obtain the sum decomposition of~\eqref{eq:CSD in N[q]}.
    
    Let us now argue the uniqueness of the sum decomposition in~\eqref{eq:CSD in N[q]}. Assume, by way of contradiction, that the element $r \in M_q$ has two distinct sum decompositions: the one shown in~\eqref{eq:CSD in N[q]} and also the following
    \begin{equation} \label{eq:CSD II for uniqueness}
        r = b_0(r) + \sum_{n \in \nn} b_n(r) q^n,
    \end{equation}
    where $(b_n(r))_{n \ge 0}$ is a sequence with $b_n(r) \in \ldb 0, \mathsf{d}(q)-1 \rdb$ for every $n \in \nn$ such that all but finitely many terms of $(b_n(r))_{n \ge 0}$ are zero. After setting $m := \max\{n \in \nn_0 : |c_n - b_n| \neq 0 \}$ and set equal the right-hand sides of~\eqref{eq:CSD in N[q]} and~\eqref{eq:CSD II for uniqueness}, we obtain
    \[
        (c_m(r) - b_m(r))\mathsf{n}(q)^m = \sum_{n=0}^{m-1} (b_n(r) - c_n(r))\mathsf{n}(q)^n\mathsf{d}(q)^{m-n}.
    \]
    As $\mathsf{d}(q)$ divides every summand in the right-hand side of the obtained identity, $\mathsf{d}(q)$ must divide the right-hand side of the same identity, whence $\mathsf{d}(q) \mid c_m - b_m$ because $\gcd(\mathsf{d}(q), \mathsf{n}(q)) = 1$. However, this implies that $b_m = c_m$, which is a contradiction.
\end{proof}

We can now use the canonical sum decomposition established in~\eqref{eq:CSD in N[q]} for the monoids $M_q$ to argue that it is atomic if and only if it is an LFFM.

\begin{theorem} \label{thm:multiplicatively cyclic are LFF iff are atomic}
    For $q \in \qq_{> 0}$, let $M_q$ be the underlying additive monoid of $\nn_0[q]$. Then the following conditions are equivalent.
    \begin{enumerate}
        \item[(a)] $M_q$ is an LFFM.
        \smallskip

        \item[(b)] $M_q$ is atomic.
        \smallskip

        \item[(c)] $q \in \qq \setminus \{\frac1n : n \in \nn_{\ge 2} \}$.
    \end{enumerate}
\end{theorem}

\begin{proof}
    (a) $\Rightarrow$ (b): It follows from the corresponding definitions.
    \smallskip

    (b) $\Rightarrow$ (c): If $q = \frac1{n}$ for some $n \in \nn$ such that $n \ge 2$, then $\frac1{n^k} = n \frac1{n^{k+1}}$ for every $k \in \nn$, and so the set of atoms of $M_q$ is empty. Hence $M_q$ is not atomic.
    \smallskip

    (c) $\Rightarrow$ (a): Finally, assume that $q \in \qq \setminus \big\{ \frac1n : n \in \nn_{\ge 2} \big\}$. In this case, it is well known and not hard to verify that the monoid $M_q$ is atomic. We split the rest of our argument into the following two cases.
    \smallskip
    
    \textsc{Case 1:} $q \ge 1$. In this case, we can argue that $M_q$ is indeed an FFM, whence an LFFM. Observe that if $q \in \nn$, then $M_q = \nn_0$ and so it is a UFM and, in particular, an FFM. Therefore we can assume that $q \in \qq_{> 1} \setminus \zz$. In this case, the set of atoms of $M_q$ is
    \begin{equation} \label{eq:atoms of multiplicatively cyclic}
        \mathcal{A}(M_q) = \big\{ q^n : n \in \nn_0 \big\}.
    \end{equation}
    Therefore $M_q$ is an increasing positive monoid of an ordered field because the sequence of atoms $(q_n)_{n \ge 0}$ is increasing. Hence $M_q$ is an FFM by virtue of \cite[Theorem~5.6]{fG19}.
    \smallskip
    
    \textsc{Case 2:} $q < 1$. Assume, towards a contradiction, that $M_q$ is not an LFFM in this case. Using the fact that $q \neq \frac1n$ for any $n \in \nn$ with $n \ge 2$, one can verify that the set of atoms of $M_q$ is given by the equality~\eqref{eq:atoms of multiplicatively cyclic} and, therefore, $M_q$ is atomic.
    
    Since $M_q$ is not an LFFM, we can choose $\ell \in \nn$ such that $|\mathsf{Z}_\ell(r)| = \infty$ for some $r \in M_q$. We can assume that we have chosen $\ell$ as small as possible. For each index $m,n \in \nn_0$, we let $\mathsf{Z}_{\ell,m}(r, q^n)$ denote the set of factorizations in $\mathsf{Z}_\ell(r)$ having exactly $m$ copies of the atom~$q^n$. Observe that for each $n \in \nn_0$, the set $\mathsf{Z}_{\ell,m}(r,q^n)$ is empty for every $m \ge \ell$. Therefore, for each index $n \in \nn_0$, the equality
    \[
        \mathsf{Z}_\ell(r) = \bigcup_{m=0}^\ell \mathsf{Z}_{\ell,m}(r,q^n)
    \]
    holds. Fix $n \in \nn_0$. Since $|\mathsf{Z}_\ell(r)| = \infty$, there is an index $m \in \ldb 0,\ell \rdb$ such that $|\mathsf{Z}_{\ell, m}(r,q^n)| = \infty$. This implies, in particular, that $r - mq^n$ belongs to~$M_q$. Define the map $f \colon \mathsf{Z}_{\ell,m}(r,q^n) \to \mathsf{Z}_{\ell-m}(r-mq^n)$ as follows:
    \[
        f \colon m q^n + \sum_{k \in \nn_0} c_k q^k \mapsto \sum_{k \in \nn_0} c_k q^k,
    \]
    for every sequence $(c_k)_{k \ge 0}$ of nonnegative integers such that $c_n = 0$ and $mq^n + \sum_{k \in \nn_0} c_k q^k$ is a length-$\ell$ factorization of~$r$. It is clear that $f$ is an injective function, and so $|\mathsf{Z}_{\ell,m}(r,q^n)| \le |\mathsf{Z}_{\ell-m}(r-mq^n)|$. Thus, $\mathsf{Z}_{\ell-m}(r-mq^n)$ is an infinite set consisting of length-$(\ell-m)$ factorizations of $r-mq^n$. Therefore it follows from the minimality of $\ell$ that $m=0$. Thus, for each $n \in \nn_0$,
    \begin{equation} \label{eq:aux size of fact sets}
        |\mathsf{Z}_{\ell,0}(r,q^n)| = \infty \quad \text{and} \quad \bigg{|} \bigcup_{m \in \nn} \mathsf{Z}_{\ell,m}(r,q^n) \bigg{|} = \bigg{|} \bigcup_{m=1}^\ell \mathsf{Z}_{\ell,m}(r,q^n) \bigg{|} < \infty
    \end{equation}
    As $q < 1$, we can choose an index $N \in \nn$ large enough so that $q^N < r/\ell$. Now let $\mathsf{Z}_{\ell,\ge N}(r)$ be the set consisting of factorizations in $\mathsf{Z}_\ell(r)$ which do not contain any of the atoms $1, q, \dots, q^N$. It follows from~\eqref{eq:aux size of fact sets} that $\mathsf{Z}_{\ell,m}(r,q^n)$ is a finite set for all $(m,n) \in \nn \times \ldb 0, N-1 \rdb$. This implies that
    \[
        \mathsf{Z}_{\ell,\ge N}(r) = \mathsf{Z}_{\ell}(r) \setminus \bigg( \bigcup_{n=0}^{N-1} \bigcup_{m \in \nn} \mathsf{Z}_{\ell,m}(r,q^n) \bigg),
    \]
    whence the set $\mathsf{Z}_{\ell,\ge N}(r)$ is nonempty. Take a sequence $(c_k)_{k \ge N}$ of nonnegative terms such that $\sum_{k \ge N} c_k q^k$ is a length-$\ell$ factorization of $r$ in $M_q$, and notice that
    \[
        r = \sum_{k \ge N} c_k q^k \le \sum_{k \ge N} c_k q^N = q^N \ell,
    \]
    which contradicts our choice of $N \in \nn$ such that $q^N < r/\ell$. Hence we conclude that $M_q$ is an LFFM.
\end{proof}

\medskip
%------------------------------------------------------------------------------------%
%------------------------------------------------------------------------------------%
\subsection{Internal Sum of Rational Cyclic Semirings}                                                           

Various classes of positive monoids that generalize the class consisting of the additive monoids of rational cyclic semirings have been studied in recent literature (see, for instance, \cite{ABLST23,ABP21,hP23,hP20}). In this final subsection, we study another natural generalization of such class, the Puiseux monoids generated by the nonnegative powers of finitely many positive rationals.

Fix $n \in \nn$ and let $x_1, \dots, x_n$ be $n$ distinct indeterminates or, more formally, let $X := \{x_1, \dots, x_n \}$ with $|X| = n$ be a subset of a field extension of $\qq$ such that $X$ is algebraically independent over $\qq$. As usual, we let $\nn_0[x_1, \dots, x_n]$ denote the polynomial semiring in the indeterminates $x_1, \dots, x_n$ over the prototypical semiring $\nn_0$. For positive rationals $q_1, \dots, q_n \in \qq_{> 0}$, set $Q := \{q_1, \dots, q_n\}$. Then
\[
	\nn_0[Q] := \nn_0[q_1, \dots, q_n] := \big\{ f(q_1, \dots, q_n) : f \in \nn_0[x_1, \dots, x_n] \big\}
\]
is the semiring extension of $\nn_0$ by $Q$ inside~$\qq$, and so the underlying additive monoid of $\nn_0[Q]$ is a Puiseux monoid. The monoids we proceed to introduce and study are submonoids of the underlying additive monoid of $\nn_0[Q]$.

\begin{defin}
	For a nonempty finite set $Q$ consisting of positive rationals, let $M_Q$ be the Puiseux monoid generated by the nonnegative powers of the elements of $Q$:
	\begin{equation} \label{eq:Q-power monoid}
		M_Q = \big\langle q^n : (q,n) \in Q \times \nn_0 \big\rangle.
	\end{equation}
	We call $M_Q$ the (Puiseux) monoid \emph{powerly generated} by $Q$ or a \emph{powerly generated} Puiseux monoid.
\end{defin}

Throughout the rest of the paper, we will use the following notation: for any $q \in \qq_{> 0}$, we write~$M_q$ instead of $M_{\{q\}}$. Observe that this is consistent with the notation used in the previous section, where we let $M_q$ denote the underlying additive monoid of the rational cyclic semiring $\nn_0[q]$. Thus, for each finite nonempty set $Q$ consisting of positive rationals, the monoid powerly generated by $Q$ is simply the internal sum over $Q$ of the Puiseux monoids $M_q$:
\[
	M_Q = \sum_{q \in Q} \big\langle q^n : n \in \nn_0 \big\rangle = \sum_{q \in Q} M_q.
\]
We have already considered a special class of powerly generated monoids: indeed, for any finite nonempty set~$P$ consisting of primes, we can set $Q := \big\{\frac1p : p \in P \big\}$ and notice that the monoid $M_Q$ powerly generated by $Q$ is the internal sum over $p \in P$ of the valuation Puiseux monoids $\nn_0\big[ \frac1p \big]$, which we already considered at the end of Section~\ref{sec:p-adic PM}. Although we have seen in Proposition~\ref{prop:M_P is antimatter and M_P is a ValMon if |P|=1} that these special powerly generated monoids are antimatter, this is not the case for more general powerly generated monoids. Let us take a look at a powerly generated monoid that is atomic but does not satisfy the ACCP.

\begin{ex}
	Fix two distinct positive integers $n_1, n_2 \in \nn$ such that $\gcd(n_1, n_2) = 1$, and take $d_1,d_2 \in \nn$ such that $n_1 n_2 < d_2$. Then consider the Puiseux monoid
	\[
		M := \bigg\langle \bigg(\frac{n_1}{d_1 d_2}\bigg)^m, \bigg(\frac{n_2}{d_1 d_2}\bigg)^m : m \in \nn_0 \bigg\rangle.
	\] 
	Fix $k \in \nn$, and consider the numerical monoid $N := \big\langle n_1^k, n_2^k \big\rangle$. Now observe that we can bound the Frobenius number $\mathsf{f}(N)$ as follows: $\mathsf{f}(N) < (n_1^k - 1)(n_2^k - 1) < d_2^k$. Therefore there exist coefficients $c_1, c_2 \in \nn_0$ such that $c_1 n_1^k + c_2 n_2^k = d_2^k$. This implies that
	\[
		\frac 1{d_1^k} = \frac{c_1 n_1^k + c_2 n_2^k}{(d_1 d_2)^k} = c_1 \bigg(\frac{n_1}{d_1 d_2}\bigg)^k + c_2 \bigg(\frac{n_2}{d_1 d_2}\bigg)^k \in M.
	\]
	We have verified that $\frac{1}{d_1^m} \in M$ for every $m \in \nn$, from which we obtain that $\big( \frac1{d_1^m} + M \big)_{m \ge 1}$ is an ascending chain of principal ideals of $M$ that does not stabilize. As a consequence, $M$ is an atomic monoid that does not satisfy the ACCP.
\end{ex}

Unlike additive monoids of rational cyclic semirings, for each nonempty finite subset $Q$ of $\qq_{> 0}$ with $|Q| \ge 2$, the monoid $M_Q$ powerly generated by $Q$ is not closed under the standard multiplication. We proceed to characterize when $M_Q$ is closed under the standard multiplication.

\begin{prop}
	For a nonempty finite set $Q$ consisting of positive rationals, the following conditions are equivalent.
	\begin{enumerate}
		\item[(a)] $M_Q$ is closed under multiplication, which means that $M_Q = \nn_0[Q]$.
		\smallskip
		
		\item[(b)] $q^i r^j \in M_Q$ for all $q,r \in Q$ and $i,j \in \nn_0$.
	\end{enumerate}
\end{prop}

\begin{proof}
	(a) $\Rightarrow$ (b): This is straightforward.
	\smallskip
	
	(b) $\Rightarrow$ (a): Assume now that $q^i r^j \in M_Q$ for all $q,r \in Q$ and $i,j \in \nn_0$. If $Q = \{q\}$ for some $q \in \qq_{> 0}$, then $M_Q = \nn_0[q]$, which is a semiring. Therefore we assume that $|Q| \ge 2$. 
	%We can assume that $n := |Q| \ge 2$ as if $Q$ is a singleton, and write $Q = \{q_1, \dots, q_n\}$ for some $q_1, \dots, q_n \in \qq_{> 0}$. 
	Let us show that, for any $k \in \nn_{\ge 2}$ and $q_1,\dots, q_k \in Q$, the product $q_1 \cdots q_k$ belongs to $M_Q$: we proceed by inducting on $k$. The case $k=2$, which is our base case, follows from our initial assumption. For the inductive step, fix $k \in \nn$ with $k \ge 2$ and suppose that, for any $\ell \in \ldb 2,k \rdb$, the product of any~$\ell$ elements of $Q$ (repetitions allowed) belongs to $M_Q$. Take $q_1, \dots, q_{k+1} \in Q$ and let us verify that $q := q_1 \cdots q_{k+1}$ belongs to $M_Q$. If $q_1 = \dots = q_{k+1}$, then $q = q_1^{k+1} \in M_Q$. Otherwise, after relabeling the subindices of $q_1, \dots, q_{k+1}$, we can take $e \in \nn$ with $e < k+1$ such that $q = (q_1 \cdots q_{k+1-e})q_{k+1}^e$ and $q_{k+1} \notin \{q_1, \dots, q_{k+1-e} \}$. As $1 \le e < k+1$, our induction hypothesis ensures that $q_1 \cdots q_{k+1-e} \in M_Q$, Therefore we can take a finite-supported sequence $(c_{q,n})_{n \ge 0}$ of nonnegative integer coefficients such that
	\[
		q = q_{k+1}^e \sum_{(q,n) \in Q \times \nn_0} c_{q,n} q^n = \sum_{(q,n) \in Q \times \nn_0} c_{q,n} q^n q_{k+1}^e,
	\]
	and so we can deduce that $q \in M_Q$ from the fact that $q^n q_{k+1}^e \in M_Q$ for all $(q,n) \in Q \times \nn_0$. Hence $c \prod_{q \in Q} q^{e_q} \in M_Q$ for all coefficient $c \in \nn_0$ and $Q$-tuple $(e_q)_{q \in Q}$ with nonnegative integer entries, and so $M_Q$ is closed under the standard multiplication.
	
	Therefore $M_Q$ is a subsemiring of~$\qq$ containing both $\nn_0$ and $Q$, and so the inclusion $\nn_0[Q] \subseteq M_Q$ follows from the fact that $\nn_0[Q]$ is the smallest subsemiring of $\qq$ satisfying the same condition. The other inclusion follows immediately, so $M_Q = \nn_0[Q]$.
\end{proof}

Observe that if $k \in Q$ for some $k \in \nn_0$, then $k^n \in \nn_0$ for all $n \in \nn_0$. Thus, $M_Q = \nn_0$ when $Q \subset \nn$
and $M_Q = M_{Q \setminus \nn}$ if $Q \not\subseteq \nn$. Therefore we can restrict our attention, without loss of generality, to monoids powerly generated by finite nonempty subsets of $\qq_{> 0} \setminus \nn$. Using the fact that, for any nonempty finite subset $Q$ of $\qq_{> 0}$, the monoid powerly generated by $Q$ is the internal sum of $|Q|$ additive monoids of rational cyclic semirings, we can simultaneously extend the canonical sum decompositions established in Propositions~\ref{prop:canonical decomposition} and~\ref{prop:CSD of multiplicative cyclic}.

\begin{prop} \label{prop:CSD for Q-power monoids}
	Let $Q$ be a subset of $\qq_{> 0} \setminus \nn$ such that $2 \le |Q| < \infty$ and $\gcd(\mathsf{d}(Q)) = 1$. Then each $r \in M_Q$ can be uniquely written as follows:
	\begin{equation} \label{eq:CSD for $Q$-power monoids}
		r = c_0(r) \ + \sum_{(q,n) \in Q \times \nn} c_{q,n}(r) q^n,
	\end{equation}
	where $c_0(r) \in \nn_0$ and, for each $q \in Q$, the sequence $(c_{q,n}(r))_{n \ge 1}$ is finite-supported with nonnegative integer terms such that $c_{q,n} \in \ldb 0, \mathsf{d}(q) - 1 \rdb$ for every $q \in Q$.
\end{prop}

\begin{proof}
	Fix a nonzero $r \in M_Q$, and let us argue that $r$ has a unique sum decomposition as the one specified in~\eqref{eq:CSD for $Q$-power monoids}. When $Q$ consists of only one element, namely $q$, then the desired sum decomposition is the canonical sum decomposition of $r$ inside $M_q$, which we have already established in Proposition~\ref{prop:CSD of multiplicative cyclic}. Thus, for the rest of the proof we assume that $|Q| \ge 2$.
	\smallskip
	
	For the existence, first take a $Q$-tuple $(r_q)_{q \in Q}$ with $r_q \in M_q$ for all $q \in Q$ such that $r = \sum_{q \in Q} r_q$, which is possible because $M_Q = \sum_{q \in Q} M_q$. Then, for each $q \in Q$, take $c_0(r_q) \in \nn_0$ and $(c_{q,n}(r_q))_{n \ge 1}$ such that the right-hand side of equality
	\[
		c_0(r_q) + \sum_{n \in \nn} c_{q,n}(r_q) q^n
	\]
	is the canonical sum decomposition of $r_q$ in $M_q$ given in~\eqref{eq:CSD in N[q]}. Then
	\begin{equation} \label{eq:existence part CSD Q-power}
		r = \sum_{q \in Q} r_q = \sum_{q \in Q} \Big( c_0(r_q) + \sum_{n \in \nn} c_{q,n}(r_q) q^n \Big) = \sum_{q \in Q} c_0(r_q) \ + \sum_{(q,n) \in Q \times \nn} c_{q,n}(r_q) q^n.
	\end{equation}
	It is clear that $\sum_{q \in Q} c_0(r_q) \in \nn_0$. In addition, for each $q \in Q$, the fact that $c_0(r_q) + \sum_{n \in \nn} c_{q,n}(r_q) q^n$ is the canonical sum decomposition of $r_q$ in $M_q$ guarantees that $c_{q,n}(r_q) \in \ldb 0, \mathsf{d}(q)-1 \rdb$. Thus, we can obtain the desired sum decomposition of $r$ in $M_Q$ from the rightmost part of~\eqref{eq:existence part CSD Q-power} after setting $c_0(r) := \sum_{q \in Q} c_0(r_q)$ and $c_{q,n}(r) := c_{q,n}(r_q)$ for all $(q,n) \in Q \times \nn$.
	\smallskip
	
	To prove the uniqueness of the sum decomposition given in~\eqref{eq:CSD for $Q$-power monoids}, suppose we can write element $r$ inside $M_Q$ in the following two ways:
	\begin{equation} \label{eq:existence of CSD of Q-power monoids}
		r = b_0(r) \ + \sum_{(q,n) \in Q \times \nn} b_{q,n}(r) q^n \quad \text{ and } \quad r = c_0(r) \ + \sum_{(q,n) \in Q \times \nn} c_{q,n}(r) q^n,
	\end{equation}
	where $b_0(r), c_0(r) \in \nn_0$ and, for each $q \in Q$, the sequences $(b_{q,n}(r))_{n \ge 1}$ are finitely supported such that $b_{q,n}(r), c_{q,n}(r) \in \ldb 0, \mathsf{d}(q)-1 \rdb$ for every $n \in \nn$. For each $q \in Q$, the sum $\sum_{n \in \nn}(b_{q,n}(r) - c_{q,n}(r))q^n$ belongs to $\nn_0[q]$, and so it can be written as $a_q \frac{1}{\mathsf{d}(q)^{e_q}}$ for some $a_q,e_q \in \nn_0$. We can further assume that the pair $(a_q, e_q)$ has been chosen in $\nn_0^2$ so that the exponent~$e_q$ is as small as it can possibly be. Note that, for each $q \in Q$ such that $a_q \neq 0$, the minimality of $e_q$ implies that $\mathsf{d}(q) \nmid a_q$. Set
	\[
		m := \prod_{q \in Q} \mathsf{d}(q)^{e_q} \quad \text{and} \quad n_q := \frac{m}{\mathsf{d}(q)^{e_q}}
	\]
	for every $q \in Q$. As $\gcd(\mathsf{d}(Q)) = 1$, for each $q \in Q$, we observe that $n_q \in \nn$ with $\gcd(\mathsf{d}(q)^{e_q},n_q) = 1$ and also that, for each $t \in Q$, the divisibility relation $\mathsf{d}(t)^{e_t} \mid n_q$ holds if and only if $t \neq q$. After subtracting the sum decompositions of $r$ given in~\eqref{eq:existence of CSD of Q-power monoids}, replacing $\sum_{n \in \nn}(b_{q,n}(r) - c_{q,n}(r))q^n$ by $a_q \frac{1}{\mathsf{d}(q)^{e_q}}$, and multiplying the obtained identity by $m$, one obtains that
	\begin{equation} \label{eq:aux aux}
		0 = m(b_0(r) - c_0(r)) + m\sum_{q \in Q} a_q \frac{1}{\mathsf{d}(q)^{e_q}} = m(b_0(r) - c_0(r)) + \sum_{q \in Q} n_qa_q.
	\end{equation}
	Thus, for each $t \in Q$, the equality $\gcd(\mathsf{d}(t)^{e_t}, n_t) = 1$ holds and so we can use that $\mathsf{d}(t)^{e_t} \mid m$ and also that $\mathsf{d}(t)^{e_t} \mid n_q$ for all $q \in Q$ with $q \neq t$ to infer $\mathsf{d}(t)^{e_t} \mid a_t$, from which $a_t = 0$. This implies that, for each $q \in Q$, the equality $\sum_{n \in \nn} b_{q,n}(r)q^n = \sum_{n \in \nn} c_{q,n}(r)q^n$. Hence the uniqueness of the canonical sum decomposition inside $M_q$ guarantees that $b_{q,n}(r) = c_{q,n}(r)$ for all pairs $(q,n) \in Q \times \nn$ and, therefore, $b_0(r) = c_0(r)$ by virtue of~\eqref{eq:aux aux}. Hence we have proved the uniqueness of~\eqref{eq:CSD for $Q$-power monoids}, which concludes the proof.
\end{proof}

Now that we have generalized to powerly generated monoids the canonical sum decomposition initially established in Proposition~\ref{prop:canonical decomposition} for internal sum of $p$-adic valuation Puiseux monoids, let us also extend the corresponding terminology.

\begin{defin} \label{def:CSD for Q-power monoids}
	Let $Q$ be a subset of $\qq_{> 0} \setminus \nn$ such that $2 \le |Q| < \infty$ and $\gcd(\mathsf{d}(Q)) = 1$. For each $r \in M_Q$, we call the right-hand side of~\ref{eq:CSD for $Q$-power monoids} the \emph{canonical sum decomposition} of $r$ inside $M_Q$.
	\begin{itemize}
		\item We let $c_0 \colon M_Q \to \nn_0$ be the function defined via the assignments $c_0 \colon r \mapsto c_0(r)$ for all $r \in M_Q$, where $c_0$ is as in~\eqref{eq:CSD for $Q$-power monoids}.
		\smallskip
		
		\item For each pair $(q,n) \in Q \times \nn$, we let $c_{q,n} \colon M_Q \to \nn_0$ be the function defined via the assignments $c_{q,n} \colon r \mapsto c_{q,n}(r)$ for all $r \in M_Q$, where $c_{q,n}(r)$ is as in~\eqref{eq:CSD for $Q$-power monoids}.
	\end{itemize}
\end{defin}

Keep the notation as in Definition~\ref{def:CSD for Q-power monoids}. First observe that, for any $r \in M_Q$,
\begin{equation} \label{eq:c_0(r) is the maximum integer dividing r}
	c_0(r) = \max\{n \in \nn_0 : n \mid_{M_Q} r \}.
\end{equation}
Now fix $q \in Q$ and then take $r \in M_q$. If $c_0(r) = 0$, then we can readily infer from the maximality of $c_0(r)$ that any element $s \in M_Q$ such that $s \mid_{M_Q} r$ must satisfy the following two conditions: $c_0(s) = 0$ and $s \in M_q$. Thus, the submonoids $M_q$ of $M_Q$ behave somehow like divisor-closed submonoids. Let us record these observations as the following lemma.

\begin{lem} \label{lem:aux consequence of the CSD in Q-power monoids}
	Let $Q$ be a subset of $\qq_{> 0} \setminus \nn$ such that $2 \le |Q| < \infty$ and $\gcd(\mathsf{d}(Q)) = 1$. Then the following statements hold for any $q \in Q$ and $r \in M_q$:
	\begin{enumerate}
		\item $c_0(r) = \max\{n \in \nn_0 : n \mid_{M_Q} r \}$.
		\smallskip
		
		\item If $c_0(r) = 0$, then any divisor of $r$ in $M_Q$ must belong to $M_q$.
	\end{enumerate}
\end{lem}

With Lemma~\ref{lem:aux consequence of the CSD in Q-power monoids}  and the canonical sum decomposition established in Proposition~\ref{prop:CSD for Q-power monoids} at our disposal, we can further investigate the atomic structure of powerly generated monoids. Although powerly generated monoids are not always closed under multiplication, they share various relevant atomic and factorization properties with the additive monoids $M_q$. The following result, which characterizes when a powerly generated monoid is atomic/antimatter (in terms of its internal summands $M_q$) generalizes \cite[Theorem~6.2]{GG18}.

\begin{theorem} \label{thm:atomicity of Q-power monoids}
	Let $Q$ be a subset of $\qq_{> 0} \setminus \nn$ such that $2 \le |Q| < \infty$ and $\gcd(\mathsf{d}(Q)) = 1$. Then the following statements hold.
	\begin{enumerate}
		\item $\big( \bigcup_{q \in Q} \mathcal{A}(M_q) \big) \setminus \{1\} \subseteq \mathcal{A}(M_Q)$.
		\smallskip
		
		\item $M_Q$ is atomic if and only if $M_q$ is atomic for all $q \in Q$, in which case
		\[
			\mathcal{A}(M_Q) = \bigcup_{q \in Q} \mathcal{A}(M_q).
		\]
		
		\item $M_Q$ is antimatter if and only if $M_q$ is antimatter for all $q \in Q$, in which case, $Q$ consists of unit fractions.
	\end{enumerate}
\end{theorem}

\begin{proof}
	(1) If $M_q$ is not atomic for any $q \in Q$, then $M_q$ is antimatter for all $q \in Q$ and the desired inclusion trivially holds. Thus, we assume that $M_q$ is atomic for some $q \in Q$. It suffices to fix $a \in Q$ such that $M_a$ is atomic and show that $a^k \in \mathcal{A}(M_Q)$ for every $k \in \nn$. Fix $k \in \nn$ and take $r,s \in M_Q$ such that $a^k = r+s$. By the uniqueness of the canonical sum decomposition of $a^k$ inside~$M_Q$, we infer that $c_0(a_k) = 0$. Thus $r, s \in M_a$ by virtue of Lemma~\ref{lem:aux consequence of the CSD in Q-power monoids}. Since $a^k \in \mathcal{A}(M_a)$, either $r=0$ or $s=0$. Hence $a^k \in \mathcal{A}(M_Q)$, as desired.
	\smallskip
	
	(2) For the direct implication suppose that $M_Q$ is atomic. Assume, towards a contradiction, that there exists $r \in Q$ such that $M_r$ is not atomic: in this case, $r := \frac1d \in Q$ for some $d \in \nn_{\ge 2}$. As $c_0(r) = 0$, it follows from part~(2) of Lemma~\ref{lem:aux consequence of the CSD in Q-power monoids} that, for each $q \in M \setminus \{r\}$, the only element of $M_q$ that divides $r$ in $M_Q$ is $0$. This, along with the fact that $M_Q = \sum_{q \in Q} M_q$, ensures that the only divisors of $r$ in $M_Q$ are those elements dividing $r$ in $M_q$. Thus, $r$ cannot have any factorization in $M_Q$ because $M_q$ is antimatter, contradicting the initial assumption that $M_Q$ is atomic. For the reverse implication, suppose that $M_q$ is atomic for all $q \in Q$. In this case, the set $\{q^n : n \in \nn_0\} \subseteq \mathcal{A}(M_q)$ for all $q \in Q$, and so $\{q^n : (q,n) \in Q \times \nn_0\} \subseteq \bigcup_{q \in Q} \mathcal{A}(M_q)$. Hence $M_Q$ is atomic.
	\smallskip
	
	Let us argue now that $\mathcal{A}(M_Q) = \bigcup_{q \in Q} \mathcal{A}(M_q)$ under the assumption that $M_Q$ is atomic, which is equivalent to the statement that $M_q$ is atomic for all $q \in Q$. Since $\{q^n : (q,n) \in Q \times \nn_0\}$ is a subset of $\bigcup_{q \in Q} \mathcal{A}(M_q)$ that generates the reduced monoid $M_Q$, the inclusion $\mathcal{A}(M_Q) \subseteq \bigcup_{q \in Q} \mathcal{A}(M_q)$ must hold. In light of part~(1), proving the reverse inclusion amounts to argue that $1 \in \mathcal{A}(M_Q)$. To do so, write $1 = r+s$ for some $r,s \in \mathcal{A}(M_Q)$, and then write
	\[
		r = \sum_{q \in Q} r_q \quad  \text{and} \quad s = \sum_{q \in Q} s_q,
	\]
	where $r_q, s_q \in M_q$ for all $q \in Q$. As $\gcd(\mathsf{d}(Q)) = 1$, from the equality $1 = \sum_{q \in Q} (r_q + s_q)$, we obtain that $r_q + s_q \in \zz$ for all $q \in Q$, whence we can take $t \in Q$ such that $r_t + s_t = 1$ and $r_q + s_q = 0$ for all $q \in Q \setminus \{t\}$. As $M_t$ is atomic, $1 \in \mathcal{A}(M_t)$, and so $\{r,s\} = \{r_t, s_t\} = \{0,1\}$. Hence $1 \in \mathcal{A}(M_Q)$.
	\smallskip

	(3) For the direct implication, suppose that $M_Q$ is antimatter. Assume, towards a contradiction, that $M_q$ is not antimatter for some $q \in Q$. Then $M_q$ is atomic, and the fact that $q \notin \nn$ ensures that $\mathcal{A}(M_q) = \{q^n : n \in \nn_0\}$. By the uniqueness of the canonical sum decomposition, $c_0(q) = 0$ and so it follows from Lemma~\ref{lem:aux consequence of the CSD in Q-power monoids} that the only elements dividing $q$ in $M_Q$ must belong to $M_q$. Hence $q$ must be an atom of $M_Q$, contradicting that $M_Q$ is antimatter.
	
	For the reverse implication, suppose that $M_q$ is antimatter for all $q \in Q$. Then, for any $q \in Q$, the fact that $M_q$ is antimatter implies that, for any $n \in \nn_0$, the element $q^n$ is not an atom of the monoid $M_q$, and so that $q^n \notin \mathcal{A}(M_Q$. As a result, none of the elements in the generating set $\{q^n : (q,n) \in Q \times \nn_0\}$ of $M_Q$ is an atom. Hence $M_Q$ must be antimatter.
	\smallskip
	
	Finally, observe that as $Q$ contains no integers, for each $q \in Q$, the monoid $M_q$ is antimatter if and only if $q$ is a unit fraction. Thus, we conclude that $M_Q$ is antimatter if and only if the set $Q$ consists of unit fractions.
\end{proof}

Inside the class $\{M_q : q \in \qq_{> 0}\}$, each monoid that satisfies the ACCP is an FFM: indeed, by virtue of \cite[Theorem~4.11]{CG22}, being an FFM, being a BFM, and satisfying the ACCP are equivalent conditions in the class $\{\nn_0[\alpha] : \alpha \in \cc \text{ is algebraic} \}$. Now that we have characterized when powerly generated monoids are atomic, let us characterize when these monoids are FFMs.

\begin{theorem} \label{thm:Q-power monoids with the FF property}
	Let $Q$ be a subset of $\qq_{> 0} \setminus \nn$ such that $2 \le |Q| < \infty$ and $\gcd(\mathsf{d}(Q)) = 1$. Then the monoid powerly generated by $Q$ is an FFM if and only if $q \ge 1$ for all $q \in Q$.
\end{theorem}

\begin{proof}
	For the direct implication, assume that $q < 1$ for some $q \in Q$. It is well known that, for each rational $r \in (0,1)$, the underlying additive monoid of the rational semiring $\nn_0[r]$ does not satisfy the ACCP, whence $M_q$ does not satisfy the ACCP. Let $(r_n + M_q)_{n \ge 1}$ be an ascending chain of principal ideals of $M_q$ that does not stabilize. As $M_Q$ is a reduced monoid, for each $n \in \nn$, if the inclusion $r_n + M_q \subseteq r_{n+1} + M_q$ is strict, so is the inclusion $r_n + M_Q \subseteq r_{n+1} + M_Q$. Hence $(r_n + M_Q)_{n \ge 1}$ is an ascending chain of principal ideals of $M_Q$ that does not stabilize, and so $M_Q$ does not satisfy the ACCP. Thus, $M_Q$ is not an FFM.
	\smallskip
	
	Conversely, suppose that $q \ge 1$ for every $q \in Q$. Then it follows from~\cite[Proposition~4.5]{fG19} that $M_q$ is a BFM. Thus, it follows from \cite[Theorem~4.4]{CG22} that the monoid $M_q$ is also an FFM. Finally, as $M_q$ is an FFM for all $q \in Q$, the fact that $M_Q = \sum_{q \in Q} M_q$ implies that $M_Q$ is also an FFM, which concludes our proof. 
\end{proof}

As the final result of this paper, we provide a necessary condition for a powerly generated monoid to be closed under multiplication.

\begin{prop} \label{prop:Q-power monoids that are semirings contains no unit fractions}
	Let $Q$ be a subset of $\qq_{> 0} \setminus \nn$ such that $2 \le |Q| < \infty$ and $\gcd(\mathsf{d}(Q)) = 1$. If the monoid powerly generated by $Q$ is closed under multiplication, then $Q$ cannot contain unit fractions.
\end{prop}

\begin{proof}
	Assume that $M_Q$ is closed under multiplication. Suppose, by way of contradiction, that $Q$ contains a unit fraction, namely, $r$. As $M_Q$ is closed under multiplication, for each $q \in Q \setminus \{r\}$, the product $rq$ belongs to $M_Q$ and so $q \in \mathsf{d}(r) M_Q$, which implies that $q \notin \mathcal{A}(M_Q)$. As the sets $Q \setminus \{r\}$ and $\mathcal{A}(M_Q)$ do not intersect, for each $q \in Q \setminus \{r\}$, the monoid $M_q$ is antimatter and so $q$ must be a unit fraction. Therefore, for any $q \in Q \setminus \{r\}$, the fact that $qr \in M_Q$ allows us to take coefficients $c_q, c_r \in \nn_0$ and exponents $e_q, e_r \in \nn_0$ such that
	\begin{equation} \label{eq:aux Q-power monoids that are semirings}
		qr = c_q \frac1{\mathsf{d}(q)^{e^q}} + c_r \frac1{\mathsf{d}(r)^{e_r}} + N,
	\end{equation}
	where $N \in \sum_{t \in Q \setminus \{q,r\}} M_t$. Since the coefficients $c_q$ and $c_r$ are nonzero, we can assume that $\mathsf{d}(q) \nmid c_q$ and $\mathsf{d}(r) \nmid c_r$. Now, as $\gcd(\mathsf{d}(Q)) = 1$, from the fact that the denominator of $qr$ is $\mathsf{d}(r) \mathsf{d}(q)$ we obtain that $e_q = e_r = 1$ and $N \in \nn_0$. Then the inequality $qr < 1$ guarantees that $N = 0$. After multiplying \eqref{eq:aux Q-power monoids that are semirings} by $\mathsf{d}(q) \mathsf{d}(r)$, we obtain the equality $1 = c_q\mathsf{d}(r) + c_r \mathsf{d}(q)$, which is clearly a contradiction. Hence we conclude that~$Q$ cannot contain unit fractions.
\end{proof}

The necessary condition given in Proposition~\ref{prop:Q-power monoids that are semirings contains no unit fractions} cannot be used to characterize powerly generated monoids that are closed under multiplication. The following example illustrates this observation.

\begin{ex}
	Let $p_1$ and $p_2$ be two primes such that $p_1 < p_2$. Now set $Q := \{q_1, q_2\}$, where $q_1 := \frac{p_2 - p_1}{p_1}$ and $q_2 := \frac{p_1}{p_2}$. Thus, $Q$ is a non-singleton subset of $\qq_{> 0}$ with $\gcd(\mathsf{d}(Q)) = \gcd(p_1, p_2) = 1$, and $Q$ does not contain unit fractions. Now consider the monoid powerly generated by $Q$:
	\[
		M_Q := \big\langle q_1^n, q_2^n : n \in \nn_0 \big\rangle.
	\]
	As neither $q_1$ nor $q_2$ is a unit fraction, the monoids $M_{q_1}$ and $M_{q_2}$ are both atomic. Hence it follows from part~(2) of Theorem~\ref{thm:atomicity of Q-power monoids} that $M_Q$ is atomic and also that $1 \in \mathcal{A}(M_Q)$. Since
	\[
		q_1 q_2 + q_2 = \left(\frac{p_2 - p_1}{p_1}\right)\left(\frac{p_1}{p_2}\right) + \frac{p_1}{p_2} = 1,
	\]
	from the fact that $1$ is an atom of $M_Q$ we obtain that $q_1 q_2 \notin M_Q$. Hence $M_Q$ is not closed under the standard multiplication.
\end{ex}

We have seen in Theorem~ \ref{thm:multiplicatively cyclic are LFF iff are atomic} that, for any $q \in \qq_{> 0}$, the monoid $M_q$ is an LFFM if and only if $q$ is not a unit fraction. Based on this, we conclude with the following open question.

\begin{question}
	Let $Q$ be a subset of $\qq_{> 0} \setminus \nn$ such that $2 \le |Q| < \infty$ and $\gcd(\mathsf{d}(Q)) = 1$. Can we characterize in terms of $Q$ when the monoid $M_Q$ powerly generated by $Q$ is an LFFM?
\end{question}

\bigskip
%%%%%%%%%%%%%%%%
%%%%%%%%%%%%%%%%
\section*{Acknowledgments}

While working on this paper, the second author was supported first by the UC Berkeley Chancellor Fellowship and then by the NSF (under the awards DMS-1903069 and DMS-2213323). Also, the fourth author was first funded by the UF Mathematics Department Fellowship and then by the UC Presidential Fellowship.

\bigskip
%%%%%%%%%%%%%%%
%%%%%%%%%%%%%%%

\end{document}